\providecommand\@dotsep{5}
\def\listtodoname{List of Todos}
\def\listoftodos{\@starttoc{tdo}\listtodoname}
\numberwithin{equation}{section}
\newtheorem{theorem}{Theorem}[section]
\newtheorem{definition}[theorem]{Definition}
\newtheorem{proposition}[theorem]{Proposition}
\newtheorem{remark}[theorem]{Remark}
\newtheorem{question}[theorem]{Question}
\newtheorem{conjecture}[theorem]{Conjecture}
\newcommand{\R}{\mathbb{R}}
\newcommand{\RN}{{\mathbb{R}^N}}
\newcommand{\LN}{{\mathbb{L}^{N+1}}}
\newcommand{\RT}{{\mathbb{R}^3}}
\renewcommand{\le}{\leqslant}
\renewcommand{\ge}{\geqslant}
\renewcommand{\d }{\delta }
\newcommand{\n }{\nabla }
\newcommand{\e }{\epsilon }
\renewcommand{\L}{\mathbb{L}}
\newcommand{\X}{\mathcal{X}}
\newcommand{\U}{\mathcal{U}}
\newcommand{\N}{\mathbb{N}}
\newcommand{\irn }{\int_{\RN}}
\def\bbm[#1]{\mbo\X{\boldmath $#1$}}
\newcommand{\beq }{\begin{equation}}
\newcommand{\eeq }{\end{equation}}
\def\Xint#1{\mathchoice 
  {\XXint\displaystyle\textstyle{#1}}%
  {\XXint\textstyle\scriptstyle{#1}}%
  {\XXint\scriptstyle\scriptscriptstyle{#1}}%
  {\XXint\scriptscriptstyle\scriptscriptstyle{#1}}%
  \!\int} 
\def\XXint#1#2#3{{\setbox0=\hbox{$#1{#2#3}{\int}$} 
  \vcenter{\hbox{$#2#3$}}\kern-.5\wd0}} 
\def\Mint{\Xint -}
\title
{On the regularity of the minimizer of the electrostatic Born-Infeld energy}
\author{D\MakeLowercase{enis} Bonheure, A\MakeLowercase{lessandro} Iacopetti}
\thanks{\emph{Acknowledgements.} Research partially supported by the project ERC Advanced Grant 2013 n.~339958 Complex Patterns for Strongly Interacting Dynamical Systems COMPAT, by Gruppo Nazionale per l'Analisi Matematica, la Pro\-ba\-bi\-li\-t\`a e le loro Applicazioni (GNAMPA) of the Istituto Nazionale di Alta Matematica (INdAM) by FNRS (PDR T.1110.14F and MIS F.4508.14) and by ARC AUWB-2012-12/17-ULB1- IAPAS}
\address[Denis Bonheure]{D\'epartement de math\'ematique, Universit\'e Libre de Bruxelles, Campus de la Plaine - CP214 boulevard du Triomphe, 1050 Bruxelles, Belgium}
\email{denis.bonheure@ulb.ac.be}
\address[Alessandro Iacopetti]{D\'epartement de math\'ematique, Universit\'e Libre de Bruxelles, Campus de la Plaine - CP214 boulevard du Triomphe, 1050 Bruxelles, Belgium}
\email{alessandro.iacopetti@ulb.ac.be}
\subjclass[2010]{35J93,35Q60, 35B65}
\keywords{Born-Infeld equation, Regularity, Non smooth operators}
\begin{document}
\begin{abstract}
We consider the electrostatic Born-Infeld energy
\begin{equation*}
\int_{\R^N}\left(1-{\sqrt{1-|\nabla u|^2}}\right)\, dx -\int_{\R^N}\rho u\, dx, 
\end{equation*}
where $\rho \in L^{m}(\R^N)$ is an assigned charge density, $m \in [1,2_*]$, $2_*:=\frac{2N}{N+2}$, $N\geq 3$. 
We prove that if $\rho \in L^q(\R^N) $ for $q>2N$, the unique minimizer $u_\rho$ is of class $W_{loc}^{2,2}(\R^N)$. Moreover, if the norm of $\rho$ is sufficiently small, the minimizer is a weak solution of the associated PDE 
\begin{equation}\label{eq:BI-abs}
\tag{$\mathcal{BI}$}
-\operatorname{div}\left(\displaystyle\frac{\nabla u}{\sqrt{1-|\nabla u|^2}}\right)= \rho \quad\hbox{in }\mathbb{R}^N,
\end{equation}
with the boundary condition $\lim_{|x|\to\infty}u(x)=0$ 
and it is of class $C^{1,\alpha}_{loc}(\RN)$, for some $\alpha \in (0,1)$.
\end{abstract}

\maketitle

\tableofcontents

\section{Introduction}

It is well known that, up to a suitable choice of the constants, the time independent Maxwell's equations in the vacuum without current density lead to Poisson's equation
\beq\label{eq:3}
-\Delta u= \rho,
\eeq
where $\rho$ represents the charge density. If $\rho$ is a point charge, i.e. $\rho =\d_0$, where $\d_0$ is  a Dirac mass at the origin,  the solution of \eqref{eq:3} is explicitely given by
$u(x)=1/(4\pi|x|)$ and its energy is infinite, that is
\[
\mathcal{E}(u)=\frac{1}{2}\int_{\RT}|\n u|^2 \ dx=+\infty.
\]
Even when $\rho\in L^1(\RT)$, we cannot say, in general, that \eqref{eq:3} admits a solution with finite energy (see e.g. \cite{FOP} for a counterexample). From a physical point of view this means that Maxwell's model violates the principle of finiteness of the energy. To avoid this phenomenon, Born \cite{B1, B2} and later on Born and Infeld \cite{BI, BI2}, proposed a new model based on the modification of Maxwell's Lagrangian density (we refer to \cite[Sect. 1]{BDP} and the references therein for more details). In the electrostatic case,  Born-Infeld theory yields the Lagrangian
\[
\mathcal{L}_{\rm BI} 
= b^2\left(1-\sqrt{1-\frac{|\bf E|^2}{b^2}}\right),
\]
where $\bf E$ is the electrical field, $b$ is a constant having the dimensions of ${e}/{r_0^2}$, $e$ and $r_0$ being respectively the charge and the effective radius of the electron. If $b\to+\infty$ or for fields having small intensities, $\mathcal{L}_{\rm BI}$ reduces to Maxwell's Lagrangian density $\frac12|\bf E|^2$. In the sequel, we take $b=1$ for simplicity and we take $\RN$, $N\geq3$, as ambient space. Remember also that Faraday's law of induction implies that $\bf E$ is a gradient. 

The counterpart of Poisson's equation in the Born-Infeld theory is the PDE
\begin{equation}\label{eq:BI-eq}
Q^-(u)= \rho,
\end{equation}
where the operator $Q^{-}$ is defined, for weakly spacelike functions (see Definition \ref{def:spacelike}), as
\begin{equation}\label{Q-}
Q^{-}(u)=-\operatorname{div}\left(\frac{\nabla u}{\sqrt{1-{|\nabla u|^2}}}\right).
\end{equation}  
Formally, this equation arises as the Euler-Lagrange equation associated with the electrostatic Born-Infeld energy
\begin{equation}\label{eq:functionalBI}
\tag{$\mathcal{BI}$-energy}
I_\rho(u)=\int_{\R^N}\left(1-{\sqrt{1-|\nabla u|^2}}\right)\, dx -\int_{\R^N}\rho u\, dx.
\end{equation}

It is interesting to notice that the operator $Q^-$ naturally appears  in other contexts, as for instance in string theory (see e.g. \cite{Gibb98}) or in classical relativity where it stands for the mean curvature operator in Lorentz-Minkowski space $(\L^{N+1},(\cdot,\cdot)_{\L^{N+1}})$. Several results have been then obtained for the corresponding Plateau's problem as well as for other situations driven  by the operator $Q^{-}$ (see e.g. \cite{AZ, BoIa,Bayard, BDD, BS,CY, GH, Kl1, Kl2, KM, Maw, M, T82}).\\  

\noindent In this paper, we address the question of the regularity of the minimizer of the electrostatic Born-Infeld energy and of the validity of the associated Euler-Lagrange equation \eqref{eq:BI-eq}.
 As a working functional space, we consider the convex set
\begin{equation*}
\X=D^{1,2}(\RN)\cap \{u\in C^{0,1}(\RN) ;\ |\nabla u|_\infty \le 1\},
\end{equation*}
equipped with the norm
\begin{equation*} \|u\|_{\X}:=\left(\int_{\RN}|\nabla u|^2 \ dx\right)^{1/2},
\end{equation*}
where $N\ge 3$ and $D^{1,2}(\RN)$ is the completion of $C_c^\infty (\RN)$ with respect to above norm. We point out that a boundary condition at infinity is encoded in $\X$, namely any $u\in \X$ satisfies $\lim_{|x|\to\infty}u(x)=0$. In addition, this functional space is weakly closed, it embeds continuously into $W^{1,s}(\RN)$ for all $s\in[2^*,\infty)$ and in $L^\infty(\RN)$ (see \cite[Lemma 2.1]{BDP}).

Let $\X^*$ be the dual space of $\X$ and $\langle \cdot, \cdot \rangle$ be the duality pairing between $\X^*$ and $\X$. We recall that $\X^*$ contains Radon measures as for instance superpositions of point charges or $L^1(\RN)$ densities (see \cite{BDP,K}).
Let $\rho \in \X^*$. Then the electrostatic Born-Infeld energy \eqref{eq:functionalBI}
%
is well defined in $\X$.
Since $I_\rho$ is not smooth at the points $u$ such that $|\nabla u|_\infty=1$,  we need to distinguish between the notion of critical point (in a relaxed sense) and the notion of weak solution. Using the tools of non-smooth critical point theory (see e.g. \cite{EkTe,S} and \cite[Sect. 2]{BDP}), we adopt the following definitions. 
\begin{definition}
 We say that $u_\rho\in \X$ is  a {\em critical point in  weak sense} for the functional $I_\rho$ if $0$ belongs to the subdifferential of $I_\rho$ at $u_\rho$.
 \end{definition}

\begin{definition}\label{def:ws}  
We say that $u_\rho\in \X$ is a {\em weak solution} of the electrostatic Born-Infeld equation
\begin{equation}\label{eq:BI}
\tag{$\mathcal{BI}$}
\left\{
\begin{array}{ll}
-\operatorname{div}\left(\displaystyle\frac{\nabla u}{\sqrt{1-|\nabla u|^2}}\right)= \rho \quad \hbox{in }\mathbb{R}^N,
\\[6mm]
\displaystyle\lim_{|x|\to \infty}u(x)= 0 
\end{array}
\right.
\end{equation} 
if for all $\psi \in\X$ we have
\begin{equation}\label{eq:weakBI}
\irn \frac{\n u_\rho \cdot \n \psi}{\sqrt{1-|\nabla u_\rho|^2}}\, dx
=\langle \rho, \psi\rangle. 
\end{equation}
\end{definition}

\noindent We point out that, in our context, the notion of critical point in weak sense is equivalent to ask that $u_\rho$ is a minimum for the functional $I_\rho$ (see \cite[Sect. 2]{BDP}), and, if $\rho$ is a distribution, the weak formulation of \eqref{eq:weakBI} extends to any test function $\psi \in C^{\infty}_{c}(\RN)$.\\

\noindent It is quite standard to show that $I_\rho$ is  bounded from below in $\X$, coercive, weakly lower semi-\, Continuous and strictly convex, and thus by the direct methods of the Calculus of Variations, a minimizer always exist and is unique (see \cite[Proposition 2.3]{BDP}). In addition, any weak solution of \eqref{eq:BI} would coincide with the minimizer (see \cite[Proposition 2.6]{BDP}). Therefore, a first natural question arises.

\begin{question}\label{q1}
If $\rho \in \X^*$, is the minimizer $u_\rho$ always a weak solution of \eqref{eq:BI}? 
\end{question}

\noindent This question has motivated several publications in the past years (see e.g. \cite{BDP,K}) and it seems hard to answer it in full generality under the mere assumption $\rho \in \X^*$. On the other hand, for any given $\rho \in \X^*$  it is possible to show that the singular set 
\beq\label{def:singset}
S=\{x \in \RN; \ |\nabla u_\rho|=1\}
\eeq
has null Lebesgue measure and the  minimizer $u_\rho$ satisfies
\beq\label{ineq1}
\irn \frac{|\nabla u_\rho|^2}{\sqrt{1-|\nabla u_\rho|^2}} \ dx \leq \langle \rho, u_\rho\rangle.
\eeq
If in addition the equality is attained in \eqref{ineq1} then $u_\rho$ is a weak solution (see \cite[Proposition 2.7, Remark 2.8]{BDP}). Proving the equality in \eqref{ineq1} seems a rather difficult task in general. 

Nevertheless, in some special cases the answer to Question \ref{q1} is positive. Indeed, when $\rho \in \X^*$ is radially distributed or when $\rho \in L^{\infty}_{loc}(\RN)\cap \X^*$, the minimizer $u_\rho$ is a weak solution to \eqref{eq:BI} (see \cite[Theorem 1.4, Theorem 1.5]{BDP}). For less regular data, as in the case of superposition of charges, namely $\rho=\sum_{i=1}^k a_i \delta_{x_i}$, where  $a_i \in \R$, $x_i \in \R^N$, $i=1,\ldots,k$,  the minimizer $u_\rho$ is a weak solution away from the charges, i.e. $u_\rho$ weakly solves 
$$ -\operatorname{div}\left(\displaystyle\frac{\nabla u}{\sqrt{1-|\nabla u|^2}}\right)= 0 \quad \hbox{in }\ \R^N \setminus \{x_1,\ldots,x_k\}.$$
Moreover, $u_\rho$ is a classical solution of the same equation in $\RN\setminus\Gamma$, where $\Gamma$ is the set of all segments whose endpoints are the charges $\{x_1,\ldots,x_k\}$. Under further assumptions we can say more: if the intensities $|a_i|$ are sufficiently small then $u_\rho$ is a classical solution in $\R^N \setminus \{x_1,\ldots,x_k\}$, it is of class $C^\infty(\RN\setminus\{x_1,\ldots,x_N\})$, strictly spacelike away from the charges and $\lim_{x\to x_i} |\nabla u_\rho|=1$ for all $i=1,\ldots,k$ (see \cite{K}, \cite[Theorem 1.6]{BDP}, \cite[Theorem 1.2]{BCF}).\\

\noindent  Another interesting and difficult issue concerns the regularity of the minimizer $u_\rho$, when dealing with $L^q$ data, $q\geq 1$ and $q\ne \infty$. 
To our knowledge, the only result available concerns the case of radial functions. Namely, if $\rho \in L_{rad}^q(\RN) \cap \X^*$, $q\geq1$ then $u_\rho \in C^1(\RN\setminus\{0\})$ and if $\rho \in L_{rad}^q(B_\delta(0)) \cap L^s(\R^N)  \cap \X^*$, for $s\geq 1$, $q\geq N$ and some $\delta>0$, then $u_\rho \in C^1(\R^N)$ (see \cite[Theorem 3.2]{BDP}) and $S$ defined by \eqref{def:singset} is empty. The proofs of these statements deeply rely on ODE techniques, and thus we cannot mimic them in the general case.

As already mentioned, when the datum is more regular (radially symmetric or not), as for instance $\rho \in L^\infty_{loc}(\R^N) \cap \X^*$, the minimizer $u_\rho$ is a weak solution to \eqref{eq:BI} and it is locally of class $C^{1,\alpha}$, for some $\alpha \in (0,1)$. Moreover, when $\rho \in C^k(\R^N) \cap \X^*$ then $u_\rho \in C^{k+1}(\RN)$ (see \cite{BS, BDP, COOS}). 
\medbreak

In a recent paper \cite{KUM2013}, Kuusi and Mingione proved a pointwise gradient estimate for weak solutions to the $p$-Laplacian	
$$-{\rm div}(|\nabla u|^{p-2}\nabla u)=\mu \quad\hbox{in }\mathbb{R}^N,$$
for $p \ge 2$, of the kind 
$$|\nabla u(x)|^{p-1} \le c(N, p) {\bf I}^{|\mu|}_1(x,\infty)$$
where 
\beq\label{def:RieszPot}
{\bf I}^{|\mu|}_1(x,\infty):=\int_0^{+\infty}\frac{|\mu|(B_t(x))}{t^N}dt \eeq
denotes the linear Riesz potential of $|\mu|$.
We also refer to \cite{KUM2014,Ming11}. In particular, this shows that if $\mu$ is an $L^q$-datum with $q<N$ which is moreover locally $L^s$ for some $s>N$, then the solution has a bounded gradient.   This gradient estimate remarkably extends the classical gradient estimate that holds for the Poisson equation to the case of the $p$-Laplacian. In the linear case, of course the estimate is a straightforward consequence of the representation formula for the solution, whereas such a simple argument is obviously unavailable in the nonlinear case. As suggested in \cite{BA}, one might see this gradient estimate as a formal inversion of the divergence operator but this obviously has to be turned rigorously into a proof (see \cite{KUM2013}).  Baroni \cite{BA} showed that the principle still holds for a class of  regular quasilinear, possibly degenerate, equations of the form 
$$
-{\rm div}\left(\frac{g(|\nabla u|)}{|\nabla u|}\nabla u\right) =\mu  
$$
in bounded domains, leading to the estimate 
\begin{equation}\label{potentialest}
g(|\nabla u(x)|)\le c {\bf I}_1^{|\mu|}(x,2R)+c g\left(\Mint_{B_R(x)}|\nabla u|dx\right) \,,
\end{equation}
where $c>0$ depends only on $N$ 
and ${\bf I}_1^{|\mu|}(x,2R)$ is the truncated linear Riesz potential (see \cite[Sect. 1]{BA}). We refer to \cite[Theorem 1.2]{BA}  for the precise assumptions on $g$. In the case of singular operators modeled on the $p$-Laplacian, when $2-1/N<p\leq 2$, it is possible to derive analogous estimates in terms of the Riesz potential (see \cite{DM, KUM}).
If this formal inversion of the divergence operator could be justified also for the singular operator such as $Q^-$, one would derive the estimate 
$$\frac{|\nabla u(x)|}{\sqrt{1-|\nabla u(x)|^2}} \le c(N) \int_0^\infty\frac{|\mu|(B_t(x))}{t^N}dt$$
and therefore the norm of the gradient of the solution would stay away from $1$ as soon as $\rho\in L^q$ with $q<N$ and $\rho$ is locally $L^s$ for some $s>N$. We emphasize that in the radial case one can indeed invert the divergence (by integrating the equation), see \cite[Theorem 3.2]{BDP}. The integrability condition on the datum is then sharp as shown by the following example. Consider the toy radial datum $1/|x|^{\beta+1}$ with $\beta>0$. Since the operator $Q^{-}$ acts on radial functions as
$$ Q^-(w)=- r^{1-N}\left(\frac{r^{N-1}w^\prime(r)}{\sqrt{1-w^\prime(r)^2}}\right)^\prime,$$
if 
$$\frac{w^\prime(r)}{\sqrt{1-w^\prime(r)^2}}=-1/r^\beta, \hbox{ for } 0<r<r_0,$$
then $|w'(r)|\to 1$ at the origin and 
$$Q^-(w) = \frac{C}{|x|^{1+\beta}} \hbox{ in } B(0,r_0).$$
Given any $q<N$, we can choose $\beta>0$ small enough so that $q<\frac{N}{\beta+1}$. Then $Q^-(w)$ belongs to $L^q(B(0,r_0))$ but $|w'|$ tends to $1$ at the origin. When $\beta<0$, 
it is easily seen that the solution extends to a $C^{1,\alpha}_{loc}$ function around the origin. 

The analogy with the case of the Poisson equation (and the $p$-Laplacian for $p> 2-1/N$) and the example with the previous radial datum support the following conjecture. 

\begin{conjecture}\label{Q2}
If $\rho \in L^q_{loc}(\RN) \cap \X^*$, with $q>N$, then the minimizer $u_\rho$ is $C^{1,\alpha}_{loc}(\RN)$, for some $\alpha \in (0,1)$.
\end{conjecture}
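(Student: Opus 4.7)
The plan I would follow, suggested by the discussion preceding the conjecture, is to establish a pointwise gradient estimate of Kuusi-Mingione type for the minimizer $u_\rho$ and then invoke classical regularity on the resulting nondegenerate equation. Concretely, the target is
\begin{equation*}
\frac{|\nabla u_\rho(x)|}{\sqrt{1-|\nabla u_\rho(x)|^2}} \le c(N)\left[{\bf I}_1^{|\rho|}(x,2R) + \Mint_{B_R(x)}\frac{|\nabla u_\rho|}{\sqrt{1-|\nabla u_\rho|^2}}\,dy\right],
\end{equation*}
namely the transcription of \eqref{potentialest} to the Orlicz function $g(t)=t/\sqrt{1-t^2}$ naturally attached to $Q^-$. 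Granting it, the assumption $\rho\in L^q_{loc}(\RN)$ with $q>N$ makes the Riesz potential locally bounded by H\"older (since $\int_0^{2R} |\rho|(B_t(x))/t^N\,dt \le c(N,R,q)\|\rho\|_{L^q(B_{2R}(x))}$ for $q>N$), so $|\nabla u_\rho|\le 1-\eta$ on compact sets for some $\eta>0$; equation \eqref{eq:BI-abs} is then locally uniformly elliptic with $L^q$ right-hand side, $q>N$, and DiBenedetto-Tolksdorf theory gives $C^{1,\alpha}_{loc}$.

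To turn this into a proof I would regularize on both sides. Mollify and truncate $\rho$ to obtain $\rho_\epsilon\in C^\infty_c(\RN)$, and replace the Lagrangian $1-\sqrt{1-|p|^2}$ by a smooth uniformly convex $\Phi_\epsilon(|p|)$ agreeing with it on $\{|p|\le 1-\epsilon\}$ and extended by a quadratic tail thereafter. The minimizer $u_\epsilon$ of the regularized functional is then a classical smooth solution of a nondegenerate uniformly elliptic equation, and $u_\epsilon \to u_\rho$ in a weak sense by a standard stability-of-minimizers argument based on strict convexity of $I_\rho$. The core work is to prove the potential estimate for $u_\epsilon$ with constants independent of $\epsilon$, following \cite{BA}: compare $u_\epsilon$ on balls $B_r$ with the solution $v_\epsilon$ of the homogeneous problem $-\operatorname{div}(\nabla\Phi_\epsilon(\nabla v_\epsilon))=0$ having boundary datum $u_\epsilon$, control the difference via a duality/energy identity against $\rho_\epsilon$, establish a Campanato-type decay for $g_\epsilon(|\nabla v_\epsilon|)$ on the scale $r$, and iterate dyadically. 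The uniform estimate then yields equi-$C^{1,\alpha}_{loc}$ bounds on $u_\epsilon$, and Arzel\`a-Ascoli together with uniqueness of the minimizer of $I_\rho$ transfers the regularity to $u_\rho$.

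The main obstacle will be the uniform-in-$\epsilon$ Lipschitz and Campanato estimates for $v_\epsilon$: as the Orlicz function $g(t)=t/\sqrt{1-t^2}$ degenerates at $t=1$ it fails the $\Delta_2$ condition, so the problem falls strictly outside the scope of \cite[Theorem 1.2]{BA}. This gap is not merely technical: any gradient bound for the homogeneous equation $Q^-(v)=0$ must exploit the geometric fact that its solutions are maximal spacelike graphs in $\LN$, and only Bartnik-Simon-type interior estimates give $|\nabla v|<1$ in the interior. Turning those qualitative bounds into quantitative ones with universal constants that survive the limit $\epsilon\to 0$ seems to me the essential difficulty, and very plausibly the reason why the conjecture remains open, even though its $p$-Laplacian analogue has been classical since \cite{KUM2013}.
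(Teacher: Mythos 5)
The statement is Conjecture~\ref{Q2} in the paper, and the paper does \emph{not} prove it. What the paper establishes (Theorems~\ref{mainteo} and~\ref{mainteo2}) is strictly weaker: $W^{2,2}_{loc}$ regularity under the stronger integrability $q>2N$ plus a global $L^m$ hypothesis, and $C^{1,\alpha}_{loc}$ only under an additional smallness condition on $|\rho|_q+|\rho|_m$. So your sketch is not, and could not be judged against, a proof of the conjecture; as you yourself acknowledge at the end, it remains open.

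Your heuristic --- inverting the divergence through a Riesz-potential estimate \`a la Baroni and Kuusi--Mingione, then reading local uniform ellipticity off the resulting gradient bound --- is exactly the one the authors discuss before stating the conjecture, and you correctly isolate the obstruction: the Orlicz function $g(t)=t/\sqrt{1-t^2}$ is singular at $t=1$, fails $\Delta_2$, and does not satisfy the structure conditions of \cite{BA} or \cite[(1.2)]{KUM}, so that program cannot presently be carried out for $Q^-$.

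Beyond that acknowledged gap, your regularization scheme has a further structural flaw that the paper's argument is designed to avoid. By replacing the Lagrangian $1-\sqrt{1-|p|^2}$ with a smooth uniformly convex $\Phi_\epsilon$ extended by a quadratic tail, you lose the a priori constraint $|\nabla u_\epsilon|\le 1$: the minimizer of the modified functional need not be spacelike at all. But the only available source of quantitative interior gradient bounds for this problem is the maximal-hypersurface geometry of Bartnik--Simon, which applies to strictly spacelike graphs; to use it on $u_\epsilon$ you would first need to know that $|\nabla u_\epsilon|$ stays in the region where $\Phi_\epsilon$ agrees with the original Lagrangian, which is circular. The paper instead mollifies only the datum, $\rho\mapsto\rho_n$, and keeps the singular operator $Q^-$ intact; by \cite[Remark 5.4, Remark 5.5]{BDP} the minimizers $u_n$ for the mollified data are then smooth, strictly spacelike classical solutions of the genuine equation, so the Lorentzian geometry is available from the start. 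The technical heart is Proposition~\ref{prop:mainestimate}, a monotonicity-formula estimate on the spacelike graph in $\LN$ (Green's formula, Federer's coarea formula, plus the Bihari-type Gronwall lemma, Theorem~\ref{variantGrom}) that controls $v^\gamma(x_0)$ together with $\int |D^2 u|^2$ in terms of $|\rho|_q$. This gives uniform $W^{2,2}$ bounds on compacta for $q>2N$ with no smallness, and, under a smallness hypothesis, a uniform lower bound on $v_n=\sqrt{1-|\nabla u_n|^2}$; only after that lower bound is in hand does the paper introduce the regularized vector field $a_\e$ (Step~5 of the proof of Theorem~\ref{mainteo2}), and the regularization is then harmless because $|\nabla u_n|$ is already known to avoid the singular set, so one can legitimately cite \cite[Theorem 1.4]{KUM}. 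In short: your diagnosis of why the conjecture is hard is correct, but the regularization you propose would add a circularity on top of the gap you already identified, and the paper's partial progress goes through a genuinely different, geometry-first route.
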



Differently from the linear framework we do not have a representation formula for the solutions of \eqref{eq:BI}, and we do not even know if the minimizer $u_\rho$ is a weak solution. Proving that the minimizer $u_\rho$ belong to $W^{2,s}_{loc}(\RN)$, for some $s\geq 1$, is already a substantial progress. 
Observe also that the singular operator $Q^-$ does not satisfy the growth and ellipticity conditions of \cite[(1.2)]{KUM}. In particular the so-\, Called ``Nonlinear Calder\'on-Zygmund'' theory does not cover our problem (see \cite{KUM, MING} and the references therein). For these reasons and as it will be clear in the sequel, it is quite challenging to prove Conjecture \ref{Q2}.


Another fascinating way to treat \eqref{eq:BI}, inspired by the papers \cite{FOP,K}, is to see the operator $Q^-$ as a series of $p$-Laplacians, namely

$$Q^-(u)=- \sum_{h=1}^\infty \alpha_h \Delta_{2h}u,$$
where $\alpha_1=\frac{1}{2}$, $\alpha_h=\frac{(2h-3)!!}{2h!!}$ for $h>1 $ (see \cite[(11)]{K}), and $\Delta_{2h}u:= \operatorname{div}\left(|\nabla u|^{2h-2}\nabla u\right)$. Indeed, the operator $Q^-$ is formally the Gateaux derivative of the functional
$$\irn \Big(1 - \sqrt{1-|\nabla u|^2}\Big) dx= \irn  \sum_{h=1}^\infty \frac{\alpha_h}{2h} |\nabla u|^{2h} dx,$$
and if we truncate the expansion up to the order $k$, we obtain a new functional $I_{\rho,k}:\X_{2k} \to \R $,
$$I_{\rho,k}(u):=  \sum_{h=1}^k \frac{\alpha_h}{2h} \irn|\nabla u|^{2h} dx - \langle u, \rho \rangle_{\X_{2k}},$$
where $\X_{2k}$ is the completion of $C^\infty_0(\RN)$ with respect to the norm
$$\|u\|_{\X_{2k}}^2:= \irn |\nabla u|^2 \ dx + \left(\irn |\nabla u|^{2k} \ dx\right)^{1/{2k}}.$$ 
Now the functional $I_{\rho,k}$ is of class $C^1$ and we have existence and uniqueness of a unique critical point for all $k\geq n_0$, provided that $\rho \in \X_{2n_0}^*$ for some $n_0\geq 1$ (see \cite[Proposition 5.1]{BDP}). Moreover, denoting by $u_k$ such a minimizer we have that $u_k$ converges to the unique minimizer $u_\rho$ of $I_\rho$, weakly in $\X_{2m}$ for all $m\geq n_0$ and uniformly on compact sets (see \cite[Theorem 5.2]{BDP}).

When the datum $\rho \in \X_{2n_0}^* \cap L^q(\RN)$, for some $q>N$, we can apply for any $k$ the regularity theory developed for weak solutions of degenerate elliptic equations (see \cite{DB, CM}), which in our specific case are ruled by positive linear combinations of $p$-Laplacians up to the order $2k$. However, we do not have a uniform control on the gradient of $u_k$ and we do not know if the regularity properties of $u_k$ pass to the limit when $k$ goes to infinity. Using this approach seems in fact quite involved. 

\medbreak

The aim of our paper is to give some partial answers to Question \ref{q1} and Conjecture \ref{Q2}. Let $N\geq 3$. We denote by $|\cdot|_q$ the standard norm in $L^q(\mathbb{R}^N)$ and  $2_*:=(2^*)^\prime=\frac{2N}{N+2}$ the conjugate exponent of $2^*=\frac{2N}{N-2}$. 

\begin{theorem}\label{mainteo}
If $\rho\in L^q(\mathbb{R}^N) \cap L^{m}(\RN)$, with $q>2N$, $m \in [1,2_*]$ then $u_\rho \in W^{2,2}_{\rm loc}(\R^N)$.
\end{theorem}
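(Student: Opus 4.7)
The strategy I would follow is to build the second-order regularity of $u_\rho$ as a uniform limit of the $p$-Laplacian approximants already discussed above the statement. Let $u_k$ denote the unique minimizer of the truncated functional $I_{\rho,k}$ on $\X_{2k}$. By \cite[Theorem~5.2]{BDP}, $u_k\to u_\rho$ uniformly on compact sets and weakly in $\X_{2m}$ for every $m\ge n_0$. For each $k$, $u_k$ weakly solves $\sum_{h=1}^k \alpha_h(-\Delta_{2h})u_k=\rho$, a non-degenerate elliptic equation thanks to the leading Laplacian ($\alpha_1=1/2$). Classical regularity theory (see \cite{DB,CM}) then ensures $u_k\in W^{2,2}_{loc}(\RN)\cap C^{1,\alpha}_{loc}(\RN)$; the goal is to derive local $W^{2,2}$-bounds on the $u_k$'s that are \emph{uniform in $k$} and then pass to the weak limit.

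The core tool is the difference-quotient technique. Fix a cut-off $\eta\in C^\infty_c(\RN)$; apply the incremental quotient $D^s_\ell$ in direction $e_\ell$ to the equation for $u_k$ and test with $\eta^2 D^s_\ell u_k$. Formally letting $s\to 0$ (in practice one keeps $s>0$ and concludes via the standard characterization of Sobolev spaces) and summing over $\ell$ yields the identity
\begin{equation*}
\sum_{h=1}^{k} \alpha_h \irn A_h(\n u_k)\, \n \de_\ell u_k \cdot \n(\eta^2 \de_\ell u_k)\, dx = -\irn \rho\, \de_\ell(\eta^2 \de_\ell u_k)\, dx,
\end{equation*}
with $A_h(\xi)=|\xi|^{2h-2}\mathrm{Id}+(2h-2)|\xi|^{2h-4}\xi\otimes \xi$ positive semi-definite. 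The $h=1$ contribution isolates the coercive term $\alpha_1\int \eta^2|D^2 u_k|^2\,dx$, while the $h\ge 2$ terms are non-negative and can simply be dropped. Cross-terms involving $\n\eta$ are controlled by Cauchy--Schwarz and Young's inequality, absorbing the quadratic $D^2 u_k$ part into the left side and leaving remainders involving $|\n u_k|$ which are uniformly bounded in $L^2_{loc}$ via the convergence $u_k\to u_\rho$.

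For the right-hand side, the expansion
\begin{equation*}
-\irn \rho\, \de_\ell(\eta^2 \de_\ell u_k)\, dx = -\irn \rho\, \eta^2 \de_\ell^2 u_k\, dx - 2\irn \rho\, \eta\, \de_\ell\eta\, \de_\ell u_k\, dx
\end{equation*}
is treated term by term. The first piece is bounded by $\eps\int \eta^2|D^2 u_k|^2+C_\eps\int\eta^2\rho^2$, which is finite because $q>2N\ge 2$ gives $\rho\in L^2_{loc}$. The second is handled by H\"older using the integrability afforded by $q>2N$, together with uniform $L^p_{loc}$-bounds on $\n u_k$ inherited from the convergence to $u_\rho$ (whose gradient is bounded by $1$). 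Choosing $\eps$ small enough to absorb the Hessian term and varying $\eta$ produces the bound $\|u_k\|_{W^{2,2}(B_R)}\le C(R,|\rho|_q,|\rho|_m,\|u_\rho\|_\X)$, uniform in $k$. Weak convergence of $D^2 u_k$ in $L^2_{loc}$ and the lower semicontinuity of the $L^2$-norm then yield $u_\rho\in W^{2,2}_{loc}(\RN)$.

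The hard part will be making the $k$-independence rigorous in the presence of arbitrarily high powers $|\n u_k|^{2h-2}$ with $h$ up to $k$: although the corresponding contributions on the left are non-negative and can be dropped, one must still control the cross-terms from the cut-off $\eta$ and the interaction between the $A_h$ coefficients and $\rho$ in the limit $s\to 0$, and this is where the precise integrability threshold $q>2N$ is expected to close a bootstrap based on the Sobolev embedding $W^{2,2}\hookrightarrow L^{2^*}$. The $L^m$-assumption with $m\in[1,2_*]$ only guarantees $\rho\in\X^*$ so that the global variational framework is well-posed, while the local second-order regularity is entirely driven by the $L^q$-integrability with $q>2N$.
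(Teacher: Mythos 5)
Your route is genuinely different from the paper's, and it has a gap exactly where the paper warns of one. The paper does \emph{not} use the $p$-Laplacian truncations $u_k$ of $I_{\rho,k}$ for this theorem; instead it mollifies the datum, getting a sequence of smooth, \emph{strictly spacelike} solutions $u_n$ of $Q^-(u_n)=\rho_n$ with $|\nabla u_n|<1$, and then derives a uniform local Hessian bound from a Bartnik--Simon-type geometric monotonicity estimate in Lorentz--Minkowski space (Proposition~\ref{prop:mainestimate}) combined with a Gronwall-type lemma. The whole point of that machinery is to sidestep the obstruction that defeats the truncation scheme, which the introduction states explicitly: ``we do not have a uniform control on the gradient of $u_k$ and we do not know if the regularity properties of $u_k$ pass to the limit.''

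Concretely, here is where your argument breaks. The $u_k$ minimize over $\X_{2k}$, not over $\X$, so $|\nabla u_k|$ is not a priori $\le 1$, let alone bounded away from $1$ uniformly in $k$. When you expand
\begin{equation*}
\sum_{h=1}^{k}\alpha_h\int A_h(\n u_k)\,\n\de_\ell u_k\cdot\n(\eta^2\de_\ell u_k)\,dx,
\end{equation*}
you do get the non-negative diagonal pieces $\alpha_h\int\eta^2 A_h\n\de_\ell u_k\cdot\n\de_\ell u_k$, and you can discard those for $h\ge 2$. But Young's inequality applied to the off-diagonal $\n\eta$ pieces produces, after absorbing $\eps$ of each diagonal piece, the remainder
\begin{equation*}
C_\eps\sum_{h=1}^{k}\alpha_h\int|\n\eta|^2\,(A_h(\n u_k)\,\n\eta\cdot\n\eta/|\n\eta|^2)\,|\de_\ell u_k|^2\,dx
\ \lesssim\ C_\eps\int|\n\eta|^2\,|\n u_k|^2\Bigl(\sum_{h=1}^{k}\alpha_h(2h-1)|\n u_k|^{2h-2}\Bigr)\,dx,
\end{equation*}
and the scalar series $\sum_h\alpha_h(2h-1)t^{2h-2}$ is comparable to $\sum_h h^{-1/2}t^{2h-2}$, which diverges as $t\to 1^-$. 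So the claim that these remainders ``are uniformly bounded in $L^2_{loc}$ via the convergence $u_k\to u_\rho$'' is not justified: the convergence from \cite[Theorem 5.2]{BDP} gives weak bounds on $\n u_k$, not pointwise smallness of $|\n u_k|$, and in any case it says nothing about the weighted sums above. The right-hand side and the $h=1$ terms are handled correctly, but the uniform-in-$k$ Hessian bound does not close, and the ``hard part'' paragraph at the end of your write-up does not supply the missing control. To make progress one would need precisely the kind of a priori gradient bound that the paper obtains by exploiting the geometry of the \emph{full} Born--Infeld operator on spacelike graphs, not merely its leading Laplacian part.
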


\begin{theorem}\label{mainteo2}
Let $q>2N$ and $m \in [1,2_*]$. There exists a constant $c=c(N,q,m)$ such that if $\rho\in L^q(\mathbb{R}^N)\cap L^{m}(\R^N)$ satisfies  $|\rho|_{q}+|\rho|_{m} \leq c$, then $u_\rho$ is a weak solution of \eqref{eq:BI}, it is strictly spacelike (i.e. $|\n u_\rho|<1$ in $\RN$) and $u \in C^{1,\alpha}_{\rm loc}(\R^N)$, for some $\alpha \in (0,1)$.
\end{theorem}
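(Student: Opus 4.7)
The strategy is to reduce the full statement to proving strict spacelikeness of $u_\rho$ under smallness, namely $|\nabla u_\rho|_\infty\le 1-\eta$ for some $\eta=\eta(|\rho|_q+|\rho|_m)>0$. Once this bound is in hand, the integrand $1-\sqrt{1-|p|^2}$ of $I_\rho$ is smooth on the range of $\nabla u_\rho$, so $I_\rho$ is classically Gateaux differentiable at $u_\rho$; being a minimizer, $u_\rho$ is then a classical critical point, which is precisely the weak formulation \eqref{eq:weakBI}. Moreover, on $\{|p|\le 1-\eta\}$ the flux $p\mapsto p/\sqrt{1-|p|^2}$ is uniformly elliptic with bounded $C^1$ coefficients, so \eqref{eq:BI} becomes a uniformly elliptic quasilinear equation with right-hand side in $L^q$ for $q>2N>N$. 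The $C^{1,\alpha}_{\rm loc}$ regularity then follows from classical Ladyzhenskaya-Uraltseva / DiBenedetto-Tolksdorf theory.

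The core task is therefore the strict spacelikeness. The starting point is Theorem \ref{mainteo}, giving $u_\rho\in W^{2,2}_{\rm loc}(\RN)$. I would introduce $w:=(1-|\nabla u_\rho|^2)^{-1/2}$, which belongs at least to $L^1_{\rm loc}$ by \eqref{ineq1} and which measures the spacelike distance to the light cone. Since at this stage we do not yet know that $u_\rho$ itself satisfies \eqref{eq:weakBI}, I would argue on the approximating minimizers $u_k$ of the truncated functionals $I_{\rho,k}$ introduced in the excerpt: $u_k$ solves $-\sum_{h=1}^k\alpha_h\Delta_{2h}u_k=\rho$ in the standard weak sense, so one can differentiate the equation in the $x_j$ direction, test with $\eta^2 \partial_j u_k\, w_k^{2\beta}$ (with $\eta$ a cutoff and $w_k:=(1-|\nabla u_k|^2)^{-1/2}$), and, after summing in $j$ and absorbing, reach a Caccioppoli-type estimate of the form
\begin{equation*}
\int \eta^2 w_k^{2\beta}|\nabla w_k|^2\,dx\le C(\beta)\int(|\nabla\eta|^2+\eta^2)\,w_k^{2\beta+s}\,dx+C(\beta)\int \eta^2|\rho|^2\, w_k^{2\beta+t}\,dx,
\end{equation*}
with exponents $s,t$ and constants independent of $k$.

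A Moser iteration on this inequality, using the Sobolev embedding $W^{1,2}\hookrightarrow L^{2^*}$ together with $\rho\in L^q\cap L^m$ (with $q>2N$, $m\in[1,2_*]$) to control the right-hand side via H\"older, yields a local bound of the form
\begin{equation*}
\|w_k\|_{L^\infty(B_{R/2})}\le \Phi\bigl(|\rho|_q+|\rho|_m,\, \|w_k\|_{L^{p_0}(B_R)}\bigr)
\end{equation*}
uniform in $k$, with $\Phi$ sublinear and vanishing at zero in its first argument. The smallness of $|\rho|_q+|\rho|_m$ then allows to absorb the $L^{p_0}$ norm of $w_k$ on the right-hand side, using that the latter is itself controlled by $\rho$ through \eqref{ineq1} and the $W^{2,2}_{\rm loc}$ estimate underlying Theorem \ref{mainteo}; this produces a uniform bound $\|w_k\|_{L^\infty(B_R)}\le M<\infty$. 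Since $u_k\to u_\rho$ uniformly on compact sets and $\n u_k\weakto \n u_\rho$ weakly in $L^2_{\rm loc}$, the bound passes to the limit and yields $|\nabla u_\rho|_\infty^2\le 1-1/M^2<1$, as required.

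The main obstacle is the uniform-in-$k$ character of the Caccioppoli estimate. The truncated operators $-\sum_{h=1}^k\alpha_h\Delta_{2h}$ fail the $\Delta_2$-condition uniformly in $k$, and any argument relying on polynomial growth constants (as in standard Kuusi-Mingione or Baroni potential estimates) degenerates as $k\to\infty$. One must therefore exploit directly the convexity of the Born-Infeld integrand together with the a priori bound $|\nabla u_k|\le 1$ built into $u_k\in\X$, so that the constants $C(\beta)$ depend only on $N$, $\beta$, $q$, $m$. Once this is secured, the Moser iteration and the limit passage $k\to\infty$ are comparatively routine, and the reductions sketched in the first paragraph complete the proof.
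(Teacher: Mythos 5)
The reduction in your first paragraph is sound and matches the paper's overall logic: once one has a uniform bound $|\nabla u_\rho|_\infty\le 1-\eta$, the operator is non-degenerate uniformly elliptic, the functional is Gateaux differentiable at the minimizer, and $C^{1,\alpha}_{\rm loc}$ follows from standard quasilinear theory (the paper uses the Kuusi--Mingione potential estimate after a regularization of the flux, but classical De Giorgi--Nash--Moser / Ladyzhenskaya--Uraltseva works just as well once uniform ellipticity is secured). The problem is entirely in your route to strict spacelikeness, and there are two genuine gaps.

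First, the approximating minimizers you choose are the wrong ones. The minimizers $u_k$ of the truncated functionals $I_{\rho,k}$ live in $\X_{2k}$, not in $\X$; the constraint $|\nabla u_k|_\infty\le 1$ is \emph{not} imposed and is not known to hold for them. Consequently $w_k=(1-|\nabla u_k|^2)^{-1/2}$ is not even well defined, and your test-function argument cannot start. The paper instead works with the minimizers $u_n$ of the full functional $I_{\rho_n}$ with mollified data $\rho_n$; by the earlier results quoted in the paper (since $\rho_n\in L^\infty_{\rm loc}\cap\X^*$) these $u_n$ are smooth, strictly spacelike and weak solutions, so the quantity $v_n=\sqrt{1-|\nabla u_n|^2}$ makes sense and one can manipulate the PDE directly.

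Second, and more fundamentally, even after switching to the mollified approximants, the Caccioppoli-plus-Moser scheme you sketch is precisely the kind of argument the paper explains it cannot make work. The point is that the operator degenerates as $|\nabla u_n|\to 1$, and the constants in any Caccioppoli/Moser inequality built on polynomial ellipticity bounds blow up there; you acknowledge this (``the main obstacle'') and then assert that ``exploiting the convexity of the integrand'' fixes the constants, but that is exactly the step that needs a proof and is not routine. What the paper does instead is prove a quantitatively new gradient estimate (Proposition 3.1) by a Bartnik--Simon style monotonicity formula on the spacelike graph in Lorentz--Minkowski space: one integrates a Green's identity for $\Delta_M v^\gamma$ over Lorentz balls $L_s$, uses a coarea/monotonicity argument, and closes with a nonlinear Gronwall-type inequality (Theorem A.1). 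That produces a pointwise lower bound on $v_n(x_0)$ in terms of $\fint_{K_R(x_0)} v_n^{\gamma+1}$ and $|\rho|_q$, and the integral term is in turn bounded below by a Hölder argument from the energy identity (your inequality (1.4)). This geometric monotonicity estimate is what replaces the uniform Caccioppoli/Moser step in your plan; without it, the core of your proof is missing.
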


The proofs of the above results rely on the combination of several tools. First, we consider a standard sequence of mollifiers $(\rho_n)_n$ of the datum $\rho$ and, accordingly, a sequence $(u_n)_n$ of minimizers of \eqref{eq:BI} with data $\rho_n$. The sequence $u_n$ is made of smooth strictly spacelike solutions and converges uniformly to the minimizer $u_\rho$. The second ingredient is a new estimate for smooth solutions of \eqref{eq:BI} with $L^q$ data (see Proposition \ref{prop:mainestimate}) which allows to control the integral of the second derivatives by integral and pointwise quantities associated to the gradient and the $L^q$-norm of the datum. This estimate is inspired by \cite[Lemma 2.1]{BS} and the proof is based on Federer's coarea formula, on the geometry of spacelike hypersurfaces in the Lorentz-Minkowski space, and on a variant of Gronwall's Lemma (see Theorem \ref{variantGrom}). 
At the end, we can prove that the ${W^{2,2}}$-norm of $u_n$ is uniformly bounded in compact subsets of $\RN$ and we easily conclude.\\ 

For the proof of Theorem \ref{mainteo2} we use again the mollification argument, we get suitable energy estimates, and, exploiting the hypotheses together with our gradient estimate, we prove that  $|\nabla u_n|_\infty$ definitely stays away from $1$. Finally, regularizing the operator in a suitable way and applying the estimates of Kuusi and Mingione in terms of the Riesz potential (see \cite[Theorem 1.4]{KUM}) we get that the $C^{1,\alpha}$-norm of $u_n$ in compact subsets of $\RN$ is uniformly bounded, for some $\alpha \in (0,1)$, and the result easily follows.\\ 

We point out that Theorem \ref{mainteo2} cannot be extended to all functions $\rho \in L^q(\mathbb{R}^N)\cap L^{m}(\R^N)$ by a mere scaling argument. In fact, let $t>0$, let $w \in \X$ and set $\tilde w(x):=t w(\frac{x}{t})$, $\tilde \rho(x):=\frac{1}{t} \rho(\frac{x}{t})$. By direct computation we have $|\nabla \tilde w|=|\nabla w|$ and thus $\tilde w \in \X$ for any $t>0$. In addition,
\begin{equation}\label{eq:changelp}
|\tilde\rho|^q_q=t^{N-q}|\rho|_q^q,
\end{equation}
and
\begin{equation}\label{eq:changevarenergy}
I_{\tilde \rho}(\tilde w)=t^N I_\rho(w) \ \forall w \in \X.
\end{equation}
From the uniqueness of the minimizer and \eqref{eq:changevarenergy} it follows that if $u_\rho$ is the minimizer of $I_\rho$ then the minimizer of $I_{\tilde \rho}$ is $\tilde u_\rho$. On the the other hand from \eqref{eq:changelp} it is clear that, if $q>N$, $m \in [1,2_*]$ it is not possible to find a number $t>0$ such that both the $L^q$ and $L^{m}$ norms of $\tilde \rho$ are small so that $|\tilde\rho|_{p}+|\tilde\rho|_{m} \leq c$, where $c$ is the constant given by Theorem \ref{mainteo2}.\\  

\noindent We also stress that our results cannot be recovered directly by known elliptic estimates and gradient bounds (see e.g. \cite{DB, CM, BA, BF, MA1, MA2}). Moreover, we cannot argue as in \cite[Lemma 2.2]{COOS} by putting the equation in non-divergence form because, even if the sequence $(u_n)_n$ is made of smooth strictly spacelike solutions to \eqref{eq:BI} with data $\rho_n$, the sequence of mollified data $\rho_n$ is not necessarily bounded in the $L^\infty$ norm as $n \to +\infty$.\\

The outline of the paper is the following:  in Sect. 2 we recall for convenience some basic facts about the geometry of spacelike cartesian graphs  in the Lorentz-Minkowski space and in Sect. 3 we state and prove the gradient estimate. In Sect. 4 we prove Theorem \ref{mainteo} and in Sect. 6 we prove Theorem \ref{mainteo2}. Finally, we state and prove a new variant of Gronwall's lemma in the Appendix.

\bigbreak

\section{Spacelike vertical graphs in the Lorentz-Minkowski space}

In this section we recall some known facts about the Lorentz-Minkowski space and the geometry of spacelike hypersurfaces which are expressed as cartesian graphs, also called ``vertical graphs''. These result will be useful in the next section in order to derive a gradient estimate for the solutions of \eqref{eq:BI}.

\begin{definition}\label{def:spacelike}
	Let $u\in C^{0,1}(\Omega)$, with $\Omega\subset\RN$. We say that $u$ is
	\begin{itemize}
		\item {\em weakly spacelike} if $|\n u|\le 1$ a.e. in $\Omega$;
		\item {\em spacelike} $|u(x)-u(y)|<|x-y|$ whenever $x,y\in\Omega$, $x\neq y$ and the line segment $\overline{xy}\subset\Omega$;
		\item {\em strictly spacelike} if $u$ is spacelike, $u\in C^1(\Omega)$ and $|\n u|< 1$ in $\Omega$.
	\end{itemize}
\end{definition}

The Lorentz-Minkowski space $\L^{N+1}$, is defined as the vector space $\R^{N+1}$ equipped with the symmetric bilinear form
$$(x,y)_{\L^{N+1}} := x_1y_1+\ldots+x_Ny_N-x_{N+1}y_{N+1},$$
where $x=(x_1,\ldots,x_{N+1}), y=(y_1,\ldots,y_{N+1})$. The  bilinear form $(\cdot, \cdot )_{\L^{N+1}}$ is a non-degenerate bilinear form of index one (see \cite[Sect. A]{Spivak}), where the index of a bilinear form on a real vector space is defined as the largest dimension of a negative definite subspace. The modulus of $x \in \L^{N+1}$ is defined as $\|x\|_{\L^{N+1}}:=|(x,x)|_{\L^{N+1}}^{1/2}$.
\begin{definition}
We say that a vector $x \in \L^{N+1}$ is spacelike if $(x,x)_{\L^{N+1}}>0$. We say that a hypersurface $M\subset \L^{N+1}$ is spacelike if for any $p \in M$ the restriction of the metric $(\cdot,\cdot)_{\L^{N+1}}$ to the tangent space $T_pM$ is positive definite. In particular $M$ has a Riemannian structure. 
\end{definition}

We fix the notation and recall the following facts (for more details see also \cite[Sect. 2]{BS}): the indices $i,j$ have the range $1,\ldots,N$, while the indices $\mathcal{I},\mathcal{J}$ have the range $1,\ldots,N+1$, $\{e_{\mathcal{I}}\}$ denotes the natural basis of $\L^{N+1}$. In particular we have $(e_{\mathcal{I}}, e_\mathcal{J})=0$ if $\mathcal{I}\neq \mathcal{J}$, $(e_{i},e_{i})_{\L^{N+1}}=1$ and  $(e_{N+1},e_{N+1})_{\L^{N+1}}=-1$. Let $u \in C^2(\RN)$ be a strictly spacelike function and let $M:=\{(x,u(x)) \in \LN; \ x\in\RN \}$ be the vertical graph associated to $u$. We use $(x_1,\ldots,x_N)$ as coordinates on $M$ and $X_i=e_i+u_ie_{N+1}$ is a base of tangent vectors for $T_{p}M$, $p=(x,u(x))$. The induced metric on $M$ is given by $g=(g_{ij})_{ij=1,\ldots,N}$, where $g_{ij}=(X_i, X_j)_{\L^{N+1}}=\delta_{ij}-u_iu_j$, and $det (g)=1-|\nabla u|^2$. In particular, $M$ is a spacelike hypersurface if and only if $u$ is a strictly spacelike function. 

Let us set $v:=\sqrt{1-|\nabla u|^2}$. Denoting by $\nu$ the upward normal to $M$, $(\nu,\nu)_{\L^{N+1}}=-1$, it is easy see that $\nu=\frac{1}{v}(\nabla u, 1)=\sum_{i=1}^N \nu_i e_i + \nu_{N+1} e_{N+1}$, where $\nu_i=\frac{1}{v}u_i$ and $\nu_{N+1}=\frac{1}{v}$. The second fundamental form of $M$ is given by $A_{ij}=(X_i,\nabla_{X_j} \nu)_{\L^{N+1}}=\frac{1} {v}u_{ij}$ and we set $\|A \|^2:=\sum_{i,j,k,l=1}^N g^{ij}g^{kl}A_{ik}A_{jl}=\frac{1}{v^2}g^{ij}g^{kl}u_{ik}u_{jl}$, where $g^{-1}=(g^{ij})_{ij=1,\ldots,N}$, $g^{ij}=\delta_{ij}+\nu_i \nu_j$,  is the inverse matrix of $g$. 

The mean curvature of $M$ is given by $H=\sum_{i,j=1}^N g^{ij}A_{ij}=\frac{1}{v} \sum_{i,j=1}^N g^{ij} u_{ij}$ (we refer to \cite{Lopez2014} for more details). We denote by $\delta=\operatorname{grad}_M$, $\operatorname{div}_M$, $\Delta_M$, respectively, the gradient, the divergence and the Laplace-Beltrami operators on $M$.  

Let $W \subset \L^{N+1}$ an open neighborhood of $M$, let $Y$ be a $C^1$ vector field on $W $, and $f \in C^1(W)$ be such that $\frac{\partial f}{\partial x_{N+1}}=0$. We have the following:
\beq\label{eq:Usefulform}
\begin{array}{lll}
\displaystyle \delta f = \sum_{\mathcal{I}=1}^{N+1} (\delta_{\mathcal{I}}f) e_{\mathcal{I}},\ \ \hbox{where}\  \delta_i f=\sum_{j=1}^N  g^{ij} \frac{\partial}{\partial x_j}f, \ \delta_{N+1} f=\frac{1}{v}\sum_{j=1}^N  \nu_i \frac{\partial}{\partial x_i}f,&&\\[6pt]
\displaystyle \|\delta f\|_{\LN}^2 = \sum_{i,j=1}^{N+1} g^{ij}  \frac{\partial f}{\partial x_i} \frac{\partial f}{\partial x_j} = |\nabla f|^2 + \sum_{i=1}^N \left(\nu_i  \frac{\partial f}{\partial x_i}\right)^2,&&\\[16pt]
\displaystyle \operatorname{div}_M Y = \sum_{i,j=1}^N g^{ij} (X_i, \nabla_{X_j} Y)_{\LN}.&&\\
\end{array}
\eeq
If in addition $f \in C^2(W)$ we have
\beq\label{eq:UsefulformBis}
\displaystyle \Delta_M f =  \operatorname{div}_M  \operatorname{grad}_M f = \sum_{i,j=1}^N g^{ij} \frac{\partial^2}{\partial x_i \partial x_j} f+ \sum_{i=1}^NH \nu_i \frac{\partial}{\partial x_i} f.
\eeq
As a consequence of  Stoke's theorem we recall Green's formula: let $G\subset M$ bounded with $\partial G$ of class $C^1$ and outer normal $\sigma$ in $M$ and let $f,g \in C^2(G)$, then
\beq\label{eq:Green1}
\int_{G} \left( g \Delta_M f - f \Delta_Mg\right) \ dA=\int_{\partial G} \left(g \delta f - f \delta g, \sigma\right)_{\L^{N+1}} \ d\mu,
\eeq
where $dA$ is the induced volume form on $M$, given by $dA=v dx$, being $dx$ the Lebesgue measure on $\R^N$, and $d\mu$ is the surface measure on $\partial G$. If $f \in C_c^1(M)$, $g \in C^1(M)$ we have also the following formulas for  the integration by parts
\beq\label{eq:integparts}
\begin{array}{lll}
\displaystyle \int_{M} f \delta_{N+1} g\ dA &=& \displaystyle-  \int_{M} g \delta_{N+1} f\ dA + \int_{M}\frac{1}{v} f g \rho \ dA, \\[12pt]
\displaystyle \int_{M} f \delta_{i} g\ dA &=&\displaystyle -  \int_{M} g \delta_{i} f\ dA + \int_{M}\nu_i f g \rho \ dA.
\end{array}
\eeq

 Let $x_0 \in \RN$, we define the Lorentz distance from $(x_0,u(x_0))$ as $$l(x,x_0):=[(x-x_0)^2-(u(x)-u(x_0))^2]^{1/2}.$$ Given $R>0$ we define the Lorentz ball of radius $R$ centered at $x_0$ as $$L_R(x_0):=\{(x,u(x)) \in M; \ l(x,x_0)<R\}$$ and its projection on $\R^N$ as  $K_R(x_0):=\{x \in \R^N; \ l(x,x_0)<R\}$. We will use also the simpler notations $l=l(x)$, $L_R$, $K_R$ whenever $x_0$ is fixed and there is no chance of confusion. Finally, setting $X_0:=(x_0,u(x_0))\in \LN$ and using the relations \eqref{eq:Usefulform}-\eqref{eq:UsefulformBis} it is elementary to verify that

\beq\label{eq:Usefulform2a}
\displaystyle \|\delta l\|_{\LN}^2 = 1 + l^{-2} (\nu, X-X_0)_{\LN}^2,
\eeq
\beq\label{eq:Usefulform2b}
\displaystyle \Delta_M\left(\frac{1}{2} l^2\right)=N + \left(H\nu, X-X_0\right)_{\LN}.
\eeq

\section{A gradient estimate}
In this section we prove a new gradient estimate for strictly spacelike classical solutions of \eqref{eq:BI} with smooth data lying in $L^q(\R^N)$. 

\begin{proposition}\label{prop:mainestimate}
 Let $N\geq3$, $q>2N$, let $x_0 \in \R^N$, $R >0$ and let $\rho \in L^q(\R^N)\cap C^1(\R^N)$. Let $u \in C^3(\RN)$ be a strictly spacelike solution of \eqref{eq:BI} and set
 $$v:=\sqrt{1-|\nabla u|^2}.$$ 
 
 Then there exist two positive constants $\gamma \in \left(0,\frac{1}{N}\right)$, $C>0$ both depending only on $N$ such that
 \begin{equation}\label{eqTesiProp2}
\begin{array}{lll}
\displaystyle  \omega_Nv^\gamma(x_0)\! & \! \geq\! & \!\displaystyle e^{-\gamma/4}R^{-N} \int_{K_{R}(x_0)} \!\!v^{\gamma+1} \ dx - c(\rho) R   \left(\frac{2}{q}\right)^{\frac{q-2}{2}}\!\int_0^R  \! s^{-\beta} \left[\frac{q}{2}\left(\omega_N\right)^{\frac{2}{q}} + \frac{c(\rho)}{1-\beta} s^{2-\beta}\right]^{\frac{q-2}{2}} \ ds \\[12pt]
 & & +\, \displaystyle C R^{2-N} e^{-\gamma/4} \int_{K_{R/2}(x_0)}\sum_{i,j=1}^N u_{ij}^2 \ dx,
\end{array}
\end{equation}
where $K_R(x_0)=\{x \in \R^N; \ [(x-x_0)^2-(u(x)-u(x_0))^2]^{1/2}<R \}$, $\omega_N$ is the volume of the unit ball in $\R^N$, $\beta=\frac{2N}{q}$ and $c(\rho)= \frac{9}{4}  |\rho|_{q}^2$.
\end{proposition}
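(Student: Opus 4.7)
The strategy is to work on the spacelike graph $M=\{(x,u(x))\}\subset\L^{N+1}$, which has mean curvature $H=-\rho$ (via the identity $Hv=-\operatorname{div}(\n u/v)$ that follows from the formulas in Section~2). The plan is to establish a Bartnik--Simon-type sub-mean-value estimate on Lorentz balls $L_R(x_0)$, by integrating a suitable pointwise differential inequality on $M$ against a cutoff depending on the Lorentz distance $l(\cdot,x_0)$.

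The first step is to derive a pointwise inequality on $M$ of the form
\begin{equation*}
-\Delta_M(v^\gamma \varphi) \;\geq\; c_1\gamma\, v^\gamma\,\|A\|^2 \;-\; c_2\, v^\gamma |\rho|^2 \;-\;\text{(lower-order terms)},
\end{equation*}
for a suitable auxiliary weight $\varphi$ that will produce the constant $e^{-\gamma/4}$ in the final estimate. This follows by differentiating $v=\sqrt{1-|\n u|^2}$ twice intrinsically on $M$, applying the Simons-type formula for $\|A\|^2$, and replacing the mean curvature by $-\rho$. The range $\gamma\in(0,1/N)$ is precisely what is needed so that the coercive term $c_1\gamma v^\gamma\|A\|^2$ dominates the cross-term $\gamma(\gamma-1)\|\delta v\|^2$ produced by the chain rule, in analogy with the classical Bartnik--Simon exponent restriction.

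Next, I would multiply by a radial cutoff $\eta(l)$ — for instance $(R^2-l^2)_+^k$ for a suitable integer $k$ — and integrate over $M$. Because the volume element is $dA=v\,dx$, the resulting average automatically picks up an extra power of $v$, yielding the term $R^{-N}\int_{K_R}v^{\gamma+1}\,dx$ on the right-hand side of \eqref{eqTesiProp2}. Green's identity \eqref{eq:Green1} transfers $\Delta_M$ from $v^\gamma\varphi$ onto $\eta(l)$; since $L_R$ is compactly contained in $M$, no boundary term survives. Using the expressions \eqref{eq:Usefulform2a}--\eqref{eq:Usefulform2b} for $\Delta_M(l^2/2)$ and $\|\delta l\|_{\LN}^2$, and substituting $H=-\rho$, one recovers: (i) the centered pointwise term $\omega_N v^\gamma(x_0)$ after letting the cutoff concentrate; and (ii) the Caccioppoli-type contribution $C R^{2-N}e^{-\gamma/4}\int_{K_{R/2}}\sum u_{ij}^2\,dx$, upon using $\|A\|^2\geq c\,v^{-2}\sum u_{ij}^2$.

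The $\rho$-dependent errors are then controlled by H\"older's inequality with conjugate pair $(q/2,\,q/(q-2))$, yielding $\int_{K_s}|\rho|^2\,dx\leq|\rho|_q^2\,(\omega_N s^N)^{(q-2)/q}$; the exponent $\beta=2N/q<1$ arises from $q>2N$. Federer's coarea formula applied along the family $\{L_s\}_{s\in(0,R]}$ converts this into a one-dimensional integral inequality whose kernel involves $s^{-\beta}$ and $s^{2-\beta}$; the Gronwall variant of Theorem~\ref{variantGrom} then produces the precise middle term of \eqref{eqTesiProp2}, with the factor $\tfrac{q}{2}\omega_N^{2/q}$ and the coefficient $c(\rho)/(1-\beta)$ inside the power $(q-2)/2$. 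The main obstacle will be Step~1: identifying the weight $\varphi$ and the exponent $\gamma$ so that, after the substitution $H=-\rho$, the coercive Hessian part of the differential inequality survives with the correct sign, while the quadratic $\rho$-error is absorbed into the specific constant $c(\rho)=\tfrac{9}{4}|\rho|_q^2$; a secondary delicate point is tracking the constants cleanly through the Gronwall iteration with the exponent $(q-2)/2$.
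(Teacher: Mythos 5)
Your proposal follows essentially the same route as the paper: a Bartnik--Simon monotonicity estimate over Lorentz balls, fed by a Simons-type pointwise inequality for $\Delta_M v^\gamma$ (where the restriction $\gamma<1/N$ is exactly what lets the coercive Hessian term dominate), then H\"older with exponents $(q/2,\,q/(q-2))$ giving $\beta=2N/q$, Federer's coarea along $\{L_s\}$, and the Gronwall variant of Theorem~\ref{variantGrom}. Two details are slightly off, though neither is a fatal gap. First, you place the origin of the factor $e^{-\gamma/4}$ inside an auxiliary pointwise weight $\varphi$ in $\Delta_M(v^\gamma\varphi)$; in the actual proof there is no such weight --- $e^{-\gamma/4}$ arises only at the ODE level, as the value at $s=R$ of the integrating factor $F(s)=e^{-\gamma R^{-1}s/4}$ used to absorb the term $\tfrac{\gamma}{2}R^{-1}\psi(s)$ in the differential inequality for $\psi(s)=s^{-N}\int_{L_s}v^\gamma\,dA$. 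Attempting to build it into $\varphi$ would reintroduce uncontrolled cross-terms $\delta v^\gamma\cdot\delta\varphi$ and is not needed. Second, the Simons computation does not produce only ``lower-order'' errors: besides $\tfrac14 v^\gamma\rho^2$, it yields the genuine divergence term $\gamma\,\delta_{N+1}(v^{\gamma+1}\rho)$, which must be integrated by parts (via \eqref{eq:integparts} and \eqref{eq:218BS}) before it can be traded for terms of the form $R^{-1}\|\delta l\|_{\LN}^2 v^\gamma$ and $R\,v^\gamma\rho^2$. Finally, be aware that the term $-D_s\bigl[\int_{L_s}(X,\nu)^2 l^{-N-2}v^\gamma\,dA\bigr]$ appearing in the monotonicity formula must be shown to contribute with the right sign after integration (the paper's $\int_0^R F\,T'\geq 0$ step), which your outline does not mention; it relies on $(X,\nu)_{\L^{N+1}}=O(|x|^2)$ as $x\to x_0$ for $C^2$ strictly spacelike $u$.
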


\begin{proof}
 Let $x_0 \in \R^N$, $R >0$. Since $u \in C^3(\R^N)$ is a strictly spacelike function the vertical graph $M=\{(x,u(x)) \in \L^{N+1}; \ x\in \R^N\}$ is a spacelike hypersurface.  
Up to a translation, we can assume without loss of generality that $X_0=(x_0,u(x_0))=(0,0)$. We denote by $X:=(x,u(x)) \in \L^{N+1}$ the position vector, and, respectively, by $L_R$, $K_R$ the Lorentz ball and its projection on $\RN$ centered at the origin. The starting point of our proof is \cite[(2.17)]{BS}. For the sake of completeness we  recall the main steps of the proof. Let $s \in (0,R)$, $f \in C^1(M)$ and $g=\frac{1}{2}(s^2-l^2)$. Since $\partial M=\emptyset$, then the outer normal of $\partial L_s$ in $M$ is given by $\sigma=\frac{\delta l}{\|\delta l\|_{\L^{N+1}}}$. Applying Green's formula \eqref{eq:Green1} with $G=L_s$ we get
$$\int_{L_s} \left[\frac{1}{2}(s^2-l^2) \Delta_M f + f \Delta_M\left(\frac{1}{2}l^2\right)\right] \ dA=\int_{\partial L_s} \left(\frac{1}{2}(s^2-l^2) \delta f + f \delta\left(\frac{1}{2} l^2\right), \frac{\delta l}{\|\delta l\|_{\L^{N+1}}} \right)_{\L^{N+1}} \ d\mu.$$

Now, using the relations \eqref{eq:Usefulform2a}-\eqref{eq:Usefulform2b} (with $X_0=(0,0)$, $H=-\rho$ because $u$ solves \eqref{eq:BI}) and recalling that $s^2-l^2=0$ on $\partial L_s$, by elementary computations we obtain
\beq\label{eqGF}
 \int_{L_s} \left[Nf + \frac{1}{2}(s^2-l^2) \Delta_M f - f \rho (X,\nu)_{\LN}\right] \ dA=\int_{\partial L_s}  f l \|\delta l\|_{\L^{N+1}} \ d\mu.
 \eeq
We recall the following special case of Federer's coarea formula (see \cite[(2.14)]{BS}, \cite[Theorem 3.2.12]{FED}), which is
\beq\label{eqGF2}
D_s \left[ \int_{L_s} h \ dA\right] = \int_{\partial L_s} h \|\delta l\|_{\LN}^{-1} \ d\mu, \ \ \forall h \in C^0(M).
\eeq
Since the integrand in the right-hand side of  \eqref{eqGF} can be rewritten as $ f l \|\delta l\|_{\L^{N+1}}^{2} \|\delta l\|_{\LN}^{-1}$, then using \eqref{eq:Usefulform2a} and applying \eqref{eqGF2} we have
\begin{eqnarray*}
s^{-N-1}\int_{\partial L_s}  f l \|\delta l\|_{\L^{N+1}} \ d\mu &=& s^{-N-1}\int_{\partial L_s}  f l \|\delta l\|_{\L^{N+1}}^{-1}  \ d\mu +  s^{-N-1}\int_{\partial L_s}  f l^{-1}(\nu, X)_{\LN}^2  \|\delta l\|_{\L^{N+1}}^{-1} \ d\mu\\
&=& s^{-N}\int_{\partial L_s}  f  \|\delta l\|_{\L^{N+1}}^{-1}  \ d\mu +  \int_{\partial L_s}  f l^{-N-2}(X, \nu)_{\LN}^2 \|\delta l\|_{\L^{N+1}}^{-1} \ d\mu\\
&=& s^{-N} D_s\left[\int_{ L_s}  f   \ dA\right] + D_s\left[\int_{ L_s}  f l^{-N-2}(X, \nu)_{\LN}^2  \ dA\right]. 
\end{eqnarray*}
Therefore, dividing each side of \eqref{eqGF} by $s^{-N-1}$ and using the previous relation we obtain the following monotonicity formula
\beq\label{eq:monotform}
\begin{array}{lll}
\displaystyle D_s\left[s^{-N}  \int_{ L_s}  f   \ dA\right] &=& \displaystyle \int_{L_s} s^{-N-1} \left(\frac{1}{2}(s^2-l^2) \Delta_M f - f \rho (X,\nu)_{\LN}\right) \ dA\\[8pt] 
 &&\displaystyle  -D_s\left[\int_{ L_s}  f l^{-N-2}(X, \nu)_{\LN}^2  \ dA\right].
\end{array}
\eeq 
Let $\gamma$ be a positive number to be determined later. By direct computation it holds that
\beq\label{eq:monotform2}
\begin{array}{lll}
\displaystyle \Delta_M v^\gamma \!&\!=\!&\! \displaystyle \gamma v^{\gamma-1} \Delta_M v + \gamma (\gamma - 1) v^{\gamma-2} \|\delta v\|_{\LN}^2\\[8pt]
\!&\!=\!&\! \displaystyle - \gamma v^{\gamma-2} \left[ \sum_{i,j=1}^N u_{ij}^2 - \gamma \left( \sum_{i=1}^N u_{ii}\right)^2 \!\! \!+ (1-\gamma) v \sum_{i=1}^N u_{ii} + v^2\rho^2 + (1-\gamma) \sum_{j=1}^N\left(\sum_{i=1}^N \nu_i u_{ij}\right)^2\right]\\[22pt]
\!&\! \!&\! \displaystyle + \gamma \delta_{N+1} \left(v^{\gamma+1}\rho \right).
\end{array}
\eeq 
Using the well known inequalities $ \left( \sum_{i=1}^N u_{ii}\right)^2 \leq N \sum_{i,j=1}^N u_{ij}^2$ and $ab\leq \e a^2 + \frac{1}{4\e} b^2$ for any $\e>0$, by elementary algebraic considerations we can find $\gamma \in (0,\frac{1}{N})$ and $C>0$ depending only on $N$ such that
\beq\label{eq:monotform3}
 \Delta_M v^\gamma \leq - C v^{\gamma-2} \left[\sum_{i,j=1}^N u_{ij}^2 + \sum_{j=1}^N\left(\sum_{i=1}^N \nu_i u_{ij}\right)^2 \right] + \frac{1}{4} v^\gamma \rho^2 + \gamma \delta_{N+1} \left(v^{\gamma+1}\rho \right).
\eeq

Combining \eqref{eq:monotform} and \eqref{eq:monotform3} we infer that
\begin{equation}\label{monotform}
\begin{array}{lll}
\displaystyle D_s\left\{s^{-N} \int_{L_s} v^\gamma \ dA\right\} 
&\leq& \displaystyle -\, C \int_{L_s} \frac{1}{2} s^{-N-1} (s^2-l^2)\left(\sum_{i,j=1}^N u_{ij}^2 + \sum_{j=1}^N\left(\sum_{i=1}^N \nu_i u_{ij}\right)^2\right) v^{\gamma-2} \ dA\\[12pt]
& & + \displaystyle \int_{L_s} \frac{1}{2} s^{-N-1} (s^2-l^2)\left(\frac{1}{4} v^\gamma \rho^2 + \gamma \delta_{N+1}(v^{\gamma+1}\rho)\right) \ dA\\[12pt]
& & +  \displaystyle \int_{L_s}  s^{-N-1} \rho ( X, \nu)_{\L^{N+1}} v^{\gamma} \ dA -  D_s\left\{\int_{L_s} ( X, \nu)_{\L^{N+1}}^2 l^{-N-2}v^\gamma \ dA\right\},
\end{array}
\end{equation}
which, in our setting, corresponds to \cite[(2.17)]{BS}.  Now, since $\frac{\partial}{\partial x_{N+1}} l =0$, we deduce from the first relation of \eqref{eq:Usefulform} that $ v \delta_{N+1} l =\sum_{i=1}^N \nu_i \frac{\partial}{\partial x_{i}} l $ and it follows that
\beq\label{eq:218BS}
|v \delta_{N+1} l | \leq \|\delta l \|_{\LN}.
\eeq

From \eqref{monotform}, integrating by parts (using the first relation in \eqref{eq:integparts}) and using \eqref{eq:Usefulform2a}, \eqref{eq:218BS} we deduce that

\begin{equation}\label{eq1monform}
\begin{array}{lll}
 \displaystyle \int_{L_s}  (s^2-l^2) \gamma \delta_{N+1}(v^{\gamma+1}\rho) \ dA &=& \displaystyle  \int_{L_s} (s^2-l^2)  v^\gamma \rho^2 \ dA + 2 \int_{L_s} v^{\gamma+1} \rho l \delta_{N+1}l \ dA  \\[12pt]
&\leq&  \displaystyle \int_{L_s} s \left( R^{-1} \|\delta l\|_{\L^{N+1}}^2 +  (s + R) \rho^2\right)v^\gamma \ dA.
\end{array}
\end{equation}
In view of \eqref{eq:Usefulform2a}, using the elementary estimate $\rho ( X, \nu)_{\L^{N+1}}  \leq \frac{1}{2}l^2\rho^2+\frac{1}{2}l^{-2} ( X, \nu)_{\L^{N+1}}^2$, and since $0<l<s$ in $L_s$, which implies that $s^{-N-1} (s^2-l^2) < s^{-N}2s$, we deduce from \eqref{monotform} and \eqref{eq1monform} that
\begin{equation}\label{eq2monform}
\begin{array}{lll}
\displaystyle D_s\left\{s^{-N} \int_{L_s} v^\gamma \ dA\right\} 
&\leq& \displaystyle -\, C \int_{L_s} \frac{1}{2} s^{-N-1} (s^2-l^2)\left(\sum_{i,j=1}^N u_{ij}^ 2  + \sum_{j=1}^N\left(\sum_{i=1}^N \nu_i u_{ij}\right)^2\right) v^{\gamma-2} \ dA\\
& & + \displaystyle \frac{1}{4} s^{-N+1} \int_{L_s}  v^\gamma \rho^2 \ dA + \frac{\gamma}{2} R^{-1} s^{-N} \int_{L_s} v^\gamma \ dA \\ & &  + \displaystyle  \frac{\gamma}{2} R^{-1} s^{-N} \int_{L_s} l^{-2} ( X, \nu)_{\L^{N+1}}^2 v^\gamma \ dA 
+  \displaystyle  \gamma R s^{-N} \int_{L_s}  v^\gamma \rho^2 \ dA \\ & &  + \displaystyle \frac{1}{2}s^{-N+1} \int_{L_s}  v^\gamma \rho^2 \ dA+  \frac{1}{2} s^{-N-1} \int_{L_s} l^{-2} ( X, \nu)_{\L^{N+1}}^2  v^{\gamma} \ dA \\
&& - \displaystyle  D_s\left\{\int_{L_s} ( X, \nu)_{\L^{N+1}}^2 l^{-N-2}v^\gamma \ dA\right\}.
\end{array}
\end{equation}
By reorganizing the terms and using the trivial estimates 
$$\frac{1}{4}s+\gamma R + s<\frac{9}{4}R, \ \ \frac{1}{2}\gamma R^{-1} + \frac{1}{2}s^{-1}<s^{-1},$$ 
we obtain
\begin{equation}\label{eq3monform}
\begin{array}{rl}
\displaystyle D_s\left\{s^{-N} \int_{L_s} v^\gamma \ dA\right\}
\leq& \displaystyle -\, C \int_{L_s} \frac{1}{2} s^{-N-1} (s^2-l^2)\left(\sum_{i,j=1}^N u_{ij}^2 + \sum_{j=1}^N\left(\sum_{i=1}^N \nu_i u_{ij}\right)^2\right) v^{\gamma-2} \ dA\\[16pt]
  & + \displaystyle \frac{9}{4}R s^{-N} \int_{L_s}  v^\gamma \rho^2 \ dA + \frac{\gamma}{2} R^{-1} s^{-N} \int_{L_s} v^\gamma \ dA\\
  &  +   \displaystyle s^{-N-1} \int_{L_s} l^{-2} ( X, \nu)_{\L^{N+1}}^2 v^\gamma \ dA - \displaystyle  D_s\left\{\int_{L_s} ( X, \nu)_{\L^{N+1}}^2 l^{-N-2}v^\gamma \ dA\right\}.
\end{array}
\end{equation}
We now estimate the term $s^{-N} \int_{L_s}  v^\gamma \rho^2 \ dA$. Set $p:=\frac{q}{q-2}$ (the conjugate exponent of $\frac{q}{2}$) and $\beta:=\frac{2N}{q} \in (0,1)$. Since $\rho^2 \in L^{\frac{q}{2}}(\R^N)$, applying H\"older's inequality and taking into account that $v\leq 1$, we infer that
\begin{align}\label{eq4monform}
\displaystyle s^{-N} \int_{L_s}  v^\gamma \rho^2 \ dA =  s^{-N} \int_{K_s}  v^{\gamma+1} \rho^2 \ dx  
&\leq \displaystyle  s^{-\frac{2N}{q}} \left( s^{-N}\int_{K_s}  v^{(\gamma+1)p} \ dx \right)^{\frac{1}{p}}  \left(\int_{K_s} |\rho|^q \ dx\right)^{\frac{2}{q}}\\[8pt]
&\leq \displaystyle  s^{-\beta} \left( s^{-N} \int_{L_s}  v^{\gamma} \ dA \right)^{\frac{1}{p}}  \left(\int_{\R^N} |\rho|^q \ dx\right)^{\frac{2}{q}}.\nonumber
\end{align}
Next, set
$$\psi(s):=s^{-N} \int_{L_s} v^\gamma \ dA.$$
We deduce from \eqref{eq3monform} and \eqref{eq4monform} that
\begin{align}\label{eq5monform}
\displaystyle\psi^\prime(s)\leq & \displaystyle -\, C \int_{L_s} \frac{1}{2} s^{-N-1} (s^2-l^2)\left(\sum_{i,j=1}^N u_{ij}^2 + \sum_{j=1}^N\left(\sum_{i=1}^N \nu_i u_{ij}\right)^2\right) v^{\gamma-2} \ dA \\
& + \displaystyle \frac{9}{4}R  \left(\int_{\R^N} |\rho|^q \ dx\right)^{\frac{2}{q}} s^{-\beta}  \psi^{\frac{1}{p}}(s) + \frac{\gamma}{2} R^{-1}\psi(s) +   s^{-N-1} \int_{L_s} l^{-2} ( X, \nu)_{\L^{N+1}}^2 v^\gamma \ dA \nonumber  \\
& - \displaystyle  D_s\left\{\int_{L_s} ( X, \nu)_{\L^{N+1}}^2 l^{-N-2}v^\gamma \ dA\right\}. \nonumber 
\end{align}
We rewrite \eqref{eq5monform} in a more convenient way
\begin{multline}\label{eq6monform}
\displaystyle\psi^\prime(s) - \frac{\gamma}{2} R^{-1}\psi(s) -  \displaystyle c(\rho) R   s^{-\beta}  \psi^{\frac{1}{p}}(s) \\
 \leq \displaystyle -\, C \int_{L_s} \frac{1}{2} s^{-N-1} (s^2-l^2)\left(\sum_{i,j=1}^N u_{ij}^2 + \sum_{j=1}^N\left(\sum_{i=1}^N \nu_i u_{ij}\right)^2\right) v^{\gamma-2} \ dA\\
+   \displaystyle  s^{-N-1} \int_{L_s} l^{-2} ( X, \nu)_{\L^{N+1}}^2 v^\gamma \ dA - D_s\left\{\int_{L_s} ( X, \nu)_{\L^{N+1}}^2 l^{-N-2}v^\gamma \ dA\right\},
\end{multline}
where $c(\rho):= \frac{9}{4}  \left(\int_{\R^N} |\rho|^q \ dx\right)^{\frac{2}{q}}$. As in \cite{BS}, using the co-area formula we rewrite the last two terms of \eqref{eq6monform} as $-T^\prime(s)$, where
$$ T(s)= \int_{L_s}\left[ \left(1-\frac{1}{N}\right) l^{-N-2}+ \frac{1}{N}s^{-N}l^{-2}\right] ( X, \nu)_{\L^{N+1}}^2 v^\gamma \ dA.$$
Multiplying each side of \eqref{eq6monform} by the integrating factor $F(s)=e^{-\frac{\gamma}{4} R^{-1} s}$, recalling that $\frac{1}{p} + \frac{2}{q}= 1$ and setting $\U(s):=e^{-\frac{\gamma}{4} R^{-1} s} \psi(s)$, we obtain
\begin{multline}\label{eq7monform}
\displaystyle\U^\prime(s)- c(\rho) R  e^{-\frac{\gamma}{2q} R^{-1} s} s^{-\beta}\ \U^{\frac{1}{p}}(s) \\
 \leq  \displaystyle -\, C F(s) \int_{L_s} \frac{1}{2} s^{-N-1} (s^2-l^2)\left(\sum_{i,j=1}^N u_{ij}^2 + \sum_{j=1}^N\left(\sum_{i=1}^N \nu_i u_{ij}\right)^2\right) v^{\gamma-2} \ dA - F(s)T^\prime(s).
\end{multline}
Let us recall that, introducing the step function $\lambda:\R\to[0,1]$ defined by $\lambda(t)=0$ if $t<0$ and $\lambda(t)=1$ if $t\geq0$, we have
\beq\label{eq7monformstep}
 \int_{L_s} h \ dA= \int_M \lambda(s-l) h \ dA,\ \forall h \in L^1(M).
\eeq
Moreover, since $v^\gamma$ is continuous at $x_0=0$ and $l$ approximates the geodesic distance in $M$ for $|x|$ small (see for instance \cite[Sect. 2]{BS})), it follows that 
\beq\label{eq7monformGeo}
\mathcal{U}(s) \to \omega_N v^{\gamma}(0),\ \ \hbox{as}\ s \to 0^+.
\eeq
Therefore, from \eqref{eq7monform}, integrating between $0$ and $R$, using \eqref{eq7monformstep}, Fubini's theorem, and taking \eqref{eq7monformGeo} into account, we deduce that
\begin{multline}\label{eq8monform}
\displaystyle\U(R)- \omega_Nv^\gamma(0) - c(\rho) R  \int_0^R  s^{-\beta} e^{-\frac{\gamma}{2q} R^{-1} s}\ \U^{\frac{1}{p}}(s) \ ds \\
 \leq \displaystyle -\, C  \int_{L_R} \left[\left(\sum_{i,j=1}^N u_{ij}^2 + \sum_{j=1}^N\left(\sum_{i=1}^N \nu_i u_{ij}\right)^2 v^{\gamma-2} \right) \cdot \left( \int_0^R  \frac{1}{2} s^{-N-1} (s^2-l^2)F(s)  \lambda(s-l) ds\right)\right] \ dA\\
\quad \quad -  \displaystyle \int_0^R  F(s)T^\prime(s) \ ds.\hfill 
\end{multline}
We observe that $ \int_0^R  F(s)T^\prime(s) \ ds\geq 0$. Indeed, since $M$ is $C^2$ and strictly spacelike we have $(X,\nu)_{\L^{N+1}}=O(|x|^2)$ as $|x|\to 0$ and thus 
$$\int_{L_s} ( X,\nu)_{\L^{N+1}}^2 l^{-N-2}\ dA \to 0, \ \ \ \int_{L_s} ( X,\nu)_{\L^{N+1}}^2 s^{-N}l^{-2}\ dA \to 0, \ \ \ \hbox{as}\ s\to 0^+.$$ Then, taking into account that $T(s)\geq 0$, $F^\prime(s)\leq 0$ and integrating by parts we have 
\begin{equation}\label{eq9monform}
 \int_0^R  F(s)T^\prime(s) \ ds =F(R)T(R)- 0 - \int_0^R F^\prime(s)T(s) \ ds\geq0.
\end{equation}
At the end from \eqref{eq8monform}, \eqref{eq9monform}, using the fact that $F(s)\geq F(R)$ for all $s \in [0,R]$, and computing the integral $$ \int_0^R  \frac{1}{2} s^{-N-1} (s^2-l^2)\lambda(s-l) ds,$$ 
we obtain
\begin{multline}\label{eq10monform}
\displaystyle\U(R)- \omega_Nv^\gamma(0) - c(\rho) R  \int_0^R  s^{-\beta} e^{-\frac{\gamma}{2q} R^{-1} s}\ \U^{\frac{1}{p}}(s) \ ds \\
 \leq \displaystyle -\, C F(R)  \int_{L_R} S_R(l)\left(\sum_{i,j=1}^N u_{ij}^2 + \sum_{j=1}^N\left(\sum_{i=1}^N \nu_i u_{ij}\right)^2 v^{\gamma-2} \right) \ dA,
\end{multline}
where $$S_R(l)=\frac{1}{N(N-2)} l^{2-N}+\frac{1}{2N}l^2R^{-N}-\frac{1}{2(N-2)}R^{2-N}.$$
Now, observing that $S_R(l)>0$ for $0<l<R$, we get 
\begin{equation*}
\U(R)- \omega_Nv^\gamma(0) - c(\rho) R  \int_0^R  s^{-\beta} e^{-\frac{\gamma}{2q} R^{-1} s}\ \U^{\frac{1}{p}}(s) \ ds\leq 0,
\end{equation*}
and in  particular
$$\U(R)\leq \omega_N + c(\rho) R  \int_0^R  s^{-\beta} \ \U^{\frac{1}{p}}(s) \ ds.$$
Since $R > 0$ is arbitrary, we can rewrite the previous relation as
\begin{equation}\label{eq12monform}
\U(t)\leq \omega_N +   \int_0^t  c(\rho) t s^{-\beta} \ \U^{\frac{1}{p}}(s) \ ds\ \ \ \forall t >0.
\end{equation}
Applying Theorem \ref{variantGrom} with $C_0=\omega_N$, $\Psi(s)=C_1s^{-\beta}$, $g(k)=k^{\frac{1}{p}}$, where $C_1=c(\rho)t$, we get that
\begin{equation}\label{eq12amonform}
\U(t)\leq \left(\frac{2}{q}\right)^{\frac{q}{2}} \left[\frac{q}{2}\left(\omega_N\right)^{\frac{2}{q}} + \frac{c(\rho)}{1-\beta} t^{2-\beta}\right]^{\frac{q}{2}}\ \ \forall t >0,
\end{equation}
where we have taken into account that $\frac{1}{p}=\frac{q-2}{q}$, $\Phi(l)=\frac{q}{2}l^{\frac{2}{q}}$ and Remark \ref{rem:VarGrom}.
 Now, coming back to \eqref{eq10monform}, observing that $S_R(l)>c(N)R^{2-N}$ for $0<l<R/2$, for some positive constant $C(N)$ depending only on $N$, recalling that $dA=v dx$, $v^{\gamma-1}\geq 1$ (because $\gamma-1<0$), $F(R)=e^{-\gamma/4}$, $e^{-\frac{\gamma}{2q} R^{-1} s}\leq 1$ for $s\in (0,R]$, $\frac{1}{p}=\frac{q-2}{q}$ and using \eqref{eq12amonform}, we obtain

\begin{multline}\label{eq13monform}
\displaystyle C(N) R^{2-N} e^{-\gamma/4} \int_{K_{R/2}}\sum_{i,j=1}^N u_{ij}^2 \ dx \\
 \leq \displaystyle \omega_Nv^\gamma(0)- \U(R) + c(\rho) R   \left(\frac{2}{q}\right)^{\frac{q-2}{2}}\int_0^R  s^{-\beta} \left[\frac{q}{2}\left(\omega_N\right)^{\frac{2}{q}} + \frac{ c(\rho)}{1-\beta} s^{2-\beta}\right]^{\frac{q-2}{2}} \ ds.
\end{multline}
Finally, using the definition of $\U$ we have
\begin{equation}\label{eq14monform}
\begin{array}{ll}
\displaystyle  \omega_Nv^\gamma(0)\geq & \displaystyle e^{-\gamma/4}R^{-N} \int_{K_{R}} v^{\gamma+1} \ dx - c(\rho) R   \left(\frac{2}{q}\right)^{\frac{q-2}{2}}\int_0^R  s^{-\beta} \left[\frac{q}{2}\left(\omega_N\right)^{\frac{2}{q}}+ \frac{ c(\rho)}{1-\beta} s^{2-\beta}\right]^{\frac{q-2}{2}} \ ds \\[12pt]
 &+\, \displaystyle C(N) R^{2-N} e^{-\gamma/4} \int_{K_{R/2}}\sum_{i,j=1}^N u_{ij}^2 \ dx,
\end{array}
\end{equation}
which, up to a translation, gives the desired relation and the proof is complete.

\end{proof}

\section{$W^{2,2}_{loc}$ regularity of the minimizer}
\begin{proof}[Proof of Theorem \ref{mainteo}]
Let $u_\rho \in \X$ be the minimizer of $I_\rho$ and let $(u_n)_n \subset \X$ be the coresponding sequence of minimizers of $I_{\rho_n}$, where $(\rho_{n})_{n}$ is a standard sequence of mollifications of $\rho$. By construction and in view of\cite[Remark 3.4, Remark 5.5]{BDP} we know that  $(u_n)_n$ is made of smooth strictly spacelike solutions of \eqref{eq:BI} with data $\rho_{n}$, and $(u_n)_n$ converges to $u_\rho$ weakly in $\X$, and uniformly in $\RN$.
 
We claim that  $\|u_n\|_\X$ is uniformly bounded. From the fact that $0 \in \X$, $I_{\rho_n}(0)=0$ and since $u_n \in \X$ is a minimizer for $I_{\rho_n}$, it follows that 
\beq\label{eqminimizerutil}
I_{\rho_n}(u_n)\leq 0, \ \ \ \forall  n \in \N.
\eeq 
 Moreover, let us recall the following elementary algebraic inequality 
\beq\label{ineq}
\frac 12 t\le 1-\sqrt{1-t}\le t, \ \ \ \forall t\in [0,1].
\eeq
We first complete the proof of the claim assuming that $m \in\mathopen]1,2_*]$. In that case, from \eqref{eqminimizerutil}, \eqref{ineq}, H\"older's inequality and Sobolev's inequality, we deduce that 
\begin{equation}\label{eq3termainteo}
\begin{array}{lll}
\displaystyle \frac{1}{2}|\n u_n|^2_2
&\le&\displaystyle \irn \Big( 1- \sqrt{1-|\n u_n|^2}\Big) \ dx\\[12pt]
&\le&\displaystyle \langle \rho_n ,u_n \rangle \leq \displaystyle |\rho_n|_{m} | u_n|_{m^\prime} \leq\displaystyle c(N,m)|\rho_n|_{m} |\n u_n|_{2}^{2 \frac{(N+1)m-N}{mN}}
\end{array}  
\end{equation}
where $m^\prime=\frac{m}{m-1}$ is the conjugate exponent of $m$, $c(N,m):=c_0\left(N, \frac{mN}{(N+1)m-N}\right)$, $c_0(N,k)$ is the Sobolev constant for the embedding of $\mathcal{D}^{1,k}(\RN)$ into $L^{k^*}(\R^N)$, $k \in [2,N\mathclose[$. In fact, for any $\phi \in \X$ we have $|\n \phi | \leq 1$ in $\RN$ and thus $|\n \phi| \in L^k(\RN)$ for all $k\geq 2$. Therefore, by Sobolev's inequality $\phi \in L^{k^*}(\RN)$ for $k \in [2,N\mathclose[$ and
\beq\label{eq:ApplSobineq} 
|\phi|_{k^*} \leq c_0(N,k) |\n \phi|_k \leq c_0(N,k) |\n \phi|_2^{2/k}.
\eeq
Finally, it is elementary to verify that for a given $m \in\mathopen]1,2_*]$ it is always possible to find $k \in [2,N\mathclose[$ such that $k^*=m^\prime$. By direct computation it holds that $k=\frac{mN}{(N+1)m-N}$ and we are done.

Observe that $|\rho_n|_{L^m(\RN)}  \leq |\rho|_{L^m(\RN)} $ by the properties of the convolution and the standard mollification sequence. Hence, from \eqref{eq3termainteo} and 
by elementary computations, we immediately deduce that 
\begin{equation}\label{eq3quatermainteo}
\|u_n\|_{\X} \leq \left(2 c(N,m) |\rho|_{m}\right)^{\frac{mN}{2(N-m)}},
\end{equation} 
which completes the proof of the claim when $m \in\mathopen]1,2_*]$.

We now consider the claim in the case $m=1$. By Morrey-Sobolev's inequality, for a given $s>N$, there exists a positive constant $c$ depending only on $N$, $s$, which we denote for convenience by $c(N,1)$, such that
\beq\label{eq:MorreySobolev}
 |\phi|_\infty \leq c(N,1) \|\phi\|_{W^{1,s}(\RN)}, \ \forall \phi \in W^{1,s}(\RN),
\eeq
where $\|\phi\|_{W^{1,s}(\RN)}=\|\phi\|_s+\|\n \phi\|_s$ is the standard norm in $W^{1,s}(\RN)$. Arguing as before, using the fact that $|\nabla \phi| \leq 1$ in $\RN$ for any $\phi \in \X$ and Sobolev's inequality, we can show that $\X$ continuously embeds into $W^{1,s}(\RN)$. Indeed, for any $s>N$ we can always find $k \in\mathopen]\frac{N}{2}, N\mathclose[$ such that $k^*=s$, namely $k=\frac{Ns}{N+s}$, and by \eqref{eq:MorreySobolev}, \eqref{eq:ApplSobineq} we deduce that
\begin{equation}\label{eqcontembedd}
|\phi|_\infty \leq c(N,1) (c(N,s^\prime) |\n \phi|_2^{2 \frac{N+s}{Ns}} + |\n \phi|_2^{2/s}), \ \forall \phi \in \X.
\end{equation}
Therefore, fixing $s>N$ and arguing as in the proof of \eqref{eq3termainteo}, taking into account that $|\rho_n|_{1} \leq |\rho|_{1}$, we get
\begin{equation}\label{eq3termainteobis}
 |\n u_n|^2_2  \leq  2 c(N,1) (c(N,s^\prime)+1) |\rho|_{1} \max\left\{|\n u_n|_2^{2\frac{N+s}{Ns}}, |\n u_n|_2^{2/s} \right\}.
\end{equation}
This proves that the sequence $(u_n)_n$ is bounded in $\X$.

\medbreak

Now we complete the proof of the theorem. Let $x_0 \in \R^N$ and $R > 0$. Denote by $K^n_R(x_0)$ the Lorentz ball associated to $(u_n)_n$. Thanks to Proposition \ref{prop:mainestimate}, we have
 \begin{equation*}
\begin{array}{lll}
\displaystyle C R^{2-N} e^{-\gamma/4} \int_{K^n_{\frac{R}{2}}(x_0)}\sum_{i,j=1}^N (u_n)_{ij}^2 \ dx&\leq&\displaystyle  \omega_Nv_n^\gamma(x_0)- \displaystyle e^{-\gamma/4} R^{-N}\int_{K^n_{R}(x_0)} v_n^{\gamma+1} \ dx \\[12pt]
& & +\displaystyle  c(\rho_n) R   \left(\frac{2}{q}\right)^{\frac{q-2}{2}}\int_0^R  s^{-\beta} \left[\frac{q}{2}\omega_N^{\frac{2}{q}} + \frac{c(\rho_n)}{1-\beta} s^{2-\beta}\right]^{\frac{q-2}{2}} \ ds.
\end{array}
\end{equation*}
Recalling that $\gamma$ and $C$ depend only on $N$, $c(\rho_n)\leq c(\rho)$ and that $B_{\frac{R}{2}}(x_0)\subset K^n_{\frac{R}{2}}(x_0)$ for all $n \in \N$, we conclude that
$$ \int_{B_{\frac{R}{2}}(x_0)}\sum_{i,j=1}^N (u_n)_{ij}^2 \ dx \leq \tilde{C},$$
for some constant $\tilde{C}=\tilde{C}(N,q,\rho,R)>0$. From this, since $|\nabla u_n| \leq 1$ in $\R^N$ and since $(u_n)_n$ uniformly bounded, we deduce that $(u_n)_n$ is bounded in $W^{2,2}(B_{\frac{R}{2}}(x_0))$. Therefore $u_n\rightharpoonup \bar u$ in $W^{2,2}(B_{\frac{R}{2}}(x_0))$, for some $\bar u \in W^{2,2}(B_{\frac{R}{2}}(x_0))$, and as $u_\rho=\bar u$ in $B_{\frac{R}{2}}(x_0)$, it follows that $u_\rho \in W^{2,2}_{loc}(\R^N)$. The proof is then complete.
\end{proof}

\section{Validity of the Euler-Lagrange equation and $C^{1,\alpha}_{loc}$ regularity}
\begin{proof}[Proof of Theorem \ref{mainteo2}]
Let $q>2N$, $m \in [1, 2_*]$, $\rho\in L^q(\mathbb{R}^N)\cap L^{m}(\R^N)$ and let $u_\rho \in \X$ be the minimizer of $I_\rho$. As in the proof of Theorem \ref{mainteo} we consider the sequence $(u_n)_n \subset \X$ of minimizers for $I_{\rho_n}$, where $(\rho_n)_n$ is a standard sequence of mollifications of $\rho$, and we set $v_n:=\sqrt{1-|\nabla u_n|^2}$. Assume first that $m \in\mathopen]1, 2_*]$. We divide the proof in several steps.\\

 \noindent\textbf{Step 1:} Let $R>0$. There exists a constant $c_1=c_1(N,m,R,\rho)>0$, which is explicit, such that
for any $y \in \RN$ it holds 
  \begin{equation}\label{eq3bismainteo}
  \int_{B_R(y)} v_n^{-1} \ dx \leq c_1, \ \forall n \in \N.
 \end{equation}
\\
 In view of \cite[Remark 5.4]{BDP} we know that $u_n \in \X$ is weak solution of \eqref{eq:BI}, hence
 \begin{equation}\label{eq4mainteo}
\irn \frac{  |\n  u_n|^2}{ \sqrt{1- |\n u_n|^2}}\, dx
= \irn \rho_n u_n.
\end{equation}
Arguing as in the proof of \eqref{eq3termainteo} and using \eqref{eq3quatermainteo}, we have
 \begin{equation}\label{eq5mainteo}
\left|\irn \rho_n u_n \ dx\right| \leq  |\rho_n|_{m} |u_n|_{m^\prime} \leq 2^{\frac{(N+1)m-N}{N-m}} \left(c(N,m)|\rho|_{m}\right)^{m^*},
\end{equation}
where $m^*=\frac{Nm}{N-m}$, $c(N,m)$ is a positive constant depending only on $N$ and $m$. Therefore, for any fixed $y \in \R^N$, $R>0$, we deduce from \eqref{eq4mainteo} and  \eqref{eq5mainteo} that

 \begin{equation}\label{eq6mainteo}
\int_{B_R(y)}  \frac{  |\n  u_n|^2}{ \sqrt{1- |\n u_n|^2}}\, dx\leq  \irn \frac{  |\n  u_n|^2}{ \sqrt{1- |\n u_n|^2}}\, dx\leq 2^{\frac{(N+1)m-N}{N-m}} \left(c(N,m)|\rho|_{m}\right)^{m^*}.
\end{equation}
Now, noticing that 
$$\int_{B_R(y)}  \frac{  |\n  u_n|^2}{ \sqrt{1- |\n u_n|^2}}\, dx = \int_{B_R(y)}  \frac{  |\n  u_n|^2-1}{ \sqrt{1- |\n u_n|^2}}\, dx+  \int_{B_R(y)}  \frac{  1}{ \sqrt{1- |\n u_n|^2}}\, dx $$
we get
$$\int_{B_R(y)}  \frac{  1}{ \sqrt{1- |\n u_n|^2}}\, dx\leq \omega_N R^N + 2^{\frac{(N+1)m-N}{N-m}} \left(c(N,m)|\rho|_{m}\right)^{m^*},$$
which gives \eqref{eq3bismainteo}, with $c_1:=\omega_N R^N + 2^{\frac{(N+1)m-N}{N-m}}  \left(c(N,m)|\rho|_{m}\right)^{m^*}$.\\

\noindent\textbf{Step 2:} Let $R>0$. There exists a constant $c_2=c_2(N,m,R,\rho)>0$, which is explicit, such that for any $y \in \R^N$ it holds
 \begin{equation}\label{eq3mainteo}
 \int_{B_R(y)} v_n^{\gamma+1} \ dx \geq c_2 R^{N(\gamma+2)}, \ \forall n \in \N,
  \end{equation}
where $\gamma \in (0,\frac{1}{N})$ is the positive constant (depending only on $N$) given by Proposition \ref{prop:mainestimate}.

\smallbreak
  
\noindent Fixing $y \in \R^N$ and $R \in (0,1]$ and recalling that $u_n$ is strictly spacelike we can write
$$\omega_N R^N = \int_{B_R(y)} 1 \ dx =  \int_{B_R(y)} v_n^{\frac{\gamma+1}{\gamma+2}} v_n^{-\frac{\gamma+1}{\gamma+2}} \ dx.$$
Noticing that $v_n^{\frac{\gamma+1}{\gamma+2}} \in L^{\gamma+2}(B_R(y))$, as $(\gamma+2)^\prime=\frac{\gamma+2}{\gamma+1}$, we obtain from H\"older's inequality and Step 1 that
\begin{equation*}
\begin{array}{lll}
\displaystyle \omega_N R^N  &\leq &\displaystyle\left( \int_{B_R(y)} v_n^{\gamma+1} \ dx\right)^{\frac{1}{\gamma+2}} \left(\int_{B_R(y)} v_n^{-1} \ dx\right)^{\frac{\gamma+1}{\gamma+2}}\\
& \leq&\displaystyle  \left( \int_{B_R(y)} v_n^{\gamma+1} \ dx\right)^{\frac{1}{\gamma+2}}  \left(\omega_N R^N+ 2^{\frac{(N+1)m-N}{N-m}} \left(c(N,m)|\rho|_{m}\right)^{m^*}\right)^{\frac{\gamma+1}{\gamma+2}}
\end{array}
\end{equation*}
which readily implies \eqref{eq3mainteo} with $$c_2(N,m,R,\rho)= \frac{\omega_N^{\gamma+2}}{\left[\omega_N R^N+2^{\frac{(N+1)m-N}{N-m}} \left(c(N,m)|\rho|_{m}\right)^{m^*}\right]^{1+\gamma}}.$$

\medbreak

\noindent\textbf{Step 3:} There exists a constant $c_3=c_3(N,m,q)>0$ such that if $|\rho|_q+|\rho|_{m}\leq c_3$ there exists $\delta \in (0,1)$ depending only on $N$, $m$, $q$ and $\rho$ such that
$$| \nabla u_n|_\infty\leq 1 -\delta, \ \forall n \in \N.$$
Let $x_0 \in \R^N$ and $R >0$. Denote by $K^n_R(x_0)$ the Lorentz ball associated to the sequence $(u_n)_n$. Thanks to Proposition \ref{prop:mainestimate} we have

\begin{equation}
\displaystyle  \omega_Nv_n^\gamma(x_0)\geq  \displaystyle e^{-\gamma/4} R^{-N} \int_{K^n_{R}(x_0)} v_n^{\gamma+1} \ dx - c(\rho_n) R   \left(\frac{2}{q}\right)^{\frac{q-2}{2}}\int_0^R  s^{-\beta} \left[\frac{q}{2}\left(\omega_N\right)^{\frac{2}{q}} + \frac{c(\rho_n)}{1-\beta} s^{2-\beta}\right]^{\frac{q-2}{2}} \ ds.
\end{equation}
Recalling that $\gamma$ depends only on $N$, $c(\rho_n)\leq c(\rho)$ and that $B_{\frac{R}{2}}(x_0)\subset K^n_{\frac{R}{2}}(x_0)$ for all $n \in \N$, we infer from Step 2 and elementary inequalities that

\begin{equation*}
\begin{array}{lll}
\displaystyle  \omega_Nv_n^\gamma(x_0)&\geq & \displaystyle e^{-\gamma/4}  \displaystyle  \frac{\omega_N^{\gamma+2}}{\left[\omega_N R^N+2^{\frac{(N+1)m-N}{N-m}} \left(c(N,m)|\rho|_{m}\right)^{m^*}\right]^{1+\gamma}}R^{N(\gamma+1)}  \\[20pt]
& & -\displaystyle 2^{\frac{q-4}{2}}c(\rho)\left[ \omega_N^{\frac{q-2}{p}} \frac{R^{2-\beta}}{1-\beta} +  c(\rho)^{\frac{q-2}{2}}  \left(\frac{2}{q}\right)^{\frac{q-2}{2}} \frac{R^{q-N}}{(1-\beta)^{\frac{q-2}{2}}(q-N-1)}\right].
\end{array}
\end{equation*}
Choosing $R=1$ and dividing by $\omega_N$, we obtain
 \begin{equation*}
\begin{array}{lll}
\displaystyle v_n^\gamma(x_0)&\geq & \displaystyle e^{-\gamma/4}  \displaystyle  \frac{\omega_N^{\gamma+1}}{\left[\omega_N+2^{\frac{(N+1)m-N}{N-m}} \left(c(N,m)|\rho|_{m}\right)^{m^*}\right]^{1+\gamma}} \\[20pt]
&&-2^{\frac{q-4}{2}}c(\rho)\left[ \frac{ \omega_N^{-\frac{2}{q}}}{1-\beta} +  c(\rho)^{\frac{q-2}{2}}  \left(\frac{2}{q}\right)^{\frac{q-2}{2}} \frac{\omega_N^{-1}}{(1-\beta)^{\frac{q-2}{2}}(q-N-1)}\right].
\end{array}
\end{equation*}
Therefore, if $|\rho|_q+|\rho|_{m}\leq c_3$, for some sufficiently small constant $c_3=c_3(N,m,q)$ depending only on $N$, $m$ and $q$, we deduce that 
$$ v_n(x_0) \geq \delta,$$
for some $\delta=\delta(N,m,q,\rho) \in (0,1)$. Recalling the definition of $v_n$, we obtain that $$|\nabla u_n(x_0)|\leq 1 - \delta^2.$$ 
Since $\delta$ does not depend on $n$ nor on $x_0$, and as $x_0$ is arbitrary, the conclusion follows.\\

\noindent\textbf{Step 4:} If $|\rho|_q+|\rho|_{m}\leq c_3$, where $c_3$ is the constant given in Step 3, then the minimizer $u_\rho$ is a weak solution to \ref{eq:BI}.

\smallbreak

Let $\varphi \in C^{\infty}_0(\R^N)$. From the proof of Theorem \ref{mainteo} we deduce that $u_n \rightharpoonup u_\rho$ in $W^{2,2}_{loc}(\R^N)$ and then, up to a subsequence, $\nabla u_n \to \nabla u_\rho$ a.e. in compact subsets of $\R^N$. Thanks to Step 3 we have that $\frac{1}{\sqrt{1-|\nabla u_n|^2}} \leq (2\delta-\delta^2)^{-1/2}$ and thus by Lebesgue's dominated converge, we get
$$\lim_{n \to + \infty} \irn \frac{\n u_n \cdot \n \varphi}{\sqrt{1-|\nabla u_n|^2}}\, dx
=\irn \frac{\n u_\rho \cdot \n \varphi}{\sqrt{1-|\nabla u_\rho|^2}}\, dx.$$
 On the other hand, since $\rho_n \to \rho$ in $L^q(\R^N)$, we have
$$\lim_{n \to + \infty} \irn \frac{\n u_n \cdot \n \varphi}{\sqrt{1-|\nabla u_n|^2}}\, dx
=\lim_{n \to + \infty} \langle \rho_n, \varphi\rangle = \langle \rho, \varphi \rangle,$$
and thus $u_\rho$ is a weak solution of \eqref{eq:BI}.\\

\noindent\textbf{Step 5:} If $|\rho|_q+|\rho|_{m}\leq c_3$, where $c_3$ is the constant given in Step 3, then $u_\rho \in C^{1,\alpha}_{loc}(\R^N)$, for some $\alpha \in (0,1)$ and it is strictly spacelike.

\smallbreak

Let $\epsilon \in (0,1)$ and let $\eta_\e \in C^\infty([0,+\infty))$ such that $r\eta_\e \in C^\infty([0,+\infty))$, $r \mapsto \eta_\e(r)r$ is increasing and satisfies
\begin{equation*}
 \eta_\e(r)r =\begin{cases} r &\hbox{for} \ r< 1-\e,\\[6pt]
       1-\frac{1}{2}\e & \hbox{for} \ r> 1-\frac{1}{2}\e.
       \end{cases}
\end{equation*}       
Let us consider the function $a_\e:\R^N\to\R^N$ given by
\begin{equation}\label{defOpreg}
a_\e(z):=\frac{z}{\sqrt{1-\eta^2_\e(|z|)|z|^2}}.
\end{equation}
Clearly $a_\e \in C^1(\R^N,\R^N)$ and it is easy to check (see Step 6) that $a_\e$ satisfies the growth and ellipticity conditions \cite[(1.2)]{KUM}, with $p=2$, $s=0$, $\nu=1$ and for some constant $L>1$ depending only on $\e$. Now, choosing $\e \in (0,\delta)$, where $\delta>0$ is given by Step 3, we have by construction that $\eta_\e(|\nabla u_n|)|\nabla u_n|=|\nabla u_n|$ for all $n$. Therefore, $(u_n)_n$ is a sequence of smooth weak solutions of
$$-\operatorname{div} \left(a_\e(\nabla u_n)\right)=\rho_n \ \hbox{in} \ \R^N.$$
Now, fixing a bounded domain $\Omega \subset \R^N$, and a ball $B_r \subset \Omega$, we can apply \cite[Theorem 1.4]{KUM} with $\omega\equiv0$ because $a_\e$ depends only on the variable $z$ (see \cite[(1.15)]{KUM} for the definition of $\omega)$ and $s=0$. Hence, as $\omega\equiv 0$, for $\alpha \in (0,\alpha_M)$, where $\alpha_M \in(0,1)$ is the universal H\"older exponent associated to the homogeneous equation $-\operatorname{div} \left(a_\e(\nabla u)\right)=0$ (see \cite[Sect. 1.1]{KUM}), we get that for any $x,y \in B_{r/4}$,

\begin{equation}\label{eq:estimateDu}
|Du_n(x)-Du_n(y)|\leq c \left[{\bf W}^{\rho_n}_{\frac{1-\alpha}{2},2}(x,r)+{\bf W}^{\rho_n}_{\frac{1-\alpha}{2},2}(y,r)\right]|x-y|^\alpha + c \fint_{B_r}|Du_n| \ dx \cdot \left(\frac{|x-y|}{r}\right)^\alpha,
\end{equation}
for any $n \in \N$, where the constant $c$ depends only on $N$, $\nu$, $L$ and $diam(\Omega)$ and 
$${\bf W}^{\rho_n}_{\frac{1-\alpha}{2},2}(x,r)={\bf I}^{\rho_n}_{1-\alpha}(x,r)= \int_0^r \frac{\int_{B_t(x)}|\rho_n(\xi)| \ d\xi}{t^{N-(1-\alpha)}} \frac{dt}{t}$$
is the standard (truncated) Wolff potential (which coincides with the truncated Riesz potential for $p=2$, see \cite[Sect. 1.1]{KUM}).
By using H\"older's inequality we have
$$\int_{B_t(x)}|\rho_n(\xi)| \ d\xi \leq (\omega_N t)^{\frac{N(q-1)}{q}} |\rho_n|_{q,B_t(x)} \leq c(N) t^{\frac{N(q-1)}{q}} |\rho|_p,$$
and thus
\begin{eqnarray*}
{\bf I}^{\rho_n}_{1-\alpha}(x,r) &\leq & c(N) |\rho|_q \int_0^r t^{-\alpha-\frac{N}{q}} \ dt.
\end{eqnarray*}
Therefore, since $\frac{N}{q}<1$ we can choose $\alpha$ sufficiently small so that $\alpha+\frac{N}{q}<1$ and we conclude that
$${\bf I}^{\rho_n}_{1-\alpha}(x,r) \leq c(N,q,r)  |\rho|_q.$$  Hence from \eqref{eq:estimateDu}, recalling that $|u_n|$ and $|\nabla u_n|$ are uniformly bounded, we get that $\|u_n\|_{C^{1,\alpha}(B_{r/4})}$ is uniformly bounded. Finally, from well known pre-\, Compactness results (see e.g. \cite[Lemma 6.36]{GT}), up to a subsequence, for $0<\alpha^\prime<\alpha$ it holds that $u_n \to \tilde u$ in $C^{1,\alpha^\prime}(\overline{B_{r/8}})$ for some $\tilde u \in C^{1,\alpha^\prime}(\overline{B_{r/8}})$. Now, since $\tilde u=u_\rho$ and $|\nabla\tilde u|\leq 1-\epsilon$ in $\overline{B_{r/8}}$, the thesis follows and the proof of the step is complete.
 
 \medbreak
 
 \noindent\textbf{Step 6:} The function $a_\e$ defined in \eqref{defOpreg} verifies the conditions \cite[(1.2)]{KUM} with $s=0$, $p=2$, $\nu=1$ and $L>1$ depending only on $\e$.
 
 \smallbreak
 
By construction $a_\e \in C^1(\R^N,\R^N)$, and, using the notations of \cite{KUM}, we denote by $\partial a_\e$ the Jacobian matrix of $a_\e$, by $|\cdot|$ both the standard euclidean norm in $\R^N$ and the euclidean matrix norm in $\mathcal{M}_N(\R)$, and by $(\cdot,\cdot)$ the euclidean scalar product in $\R^N$. Defining $f_\e:[0,+\infty) \to [0,+\infty)$ as $f_\e(r):=\frac{1}{\sqrt{1-\eta_\e^2(r)r^2}}$, and denoting by $f_\e^\prime$ its derivative, by direct computation we have:
$$\partial a_\e(z)=\left(f_\e^\prime(|z|)\frac{z_i z_j}{|z|}+f_\e(|z|) \delta_{ij}\right)_{i,j=1,\ldots,N}.$$
By construction, we have $1\leq f_\e \leq \frac{1}{\sqrt{\e-\frac{\e^2}{4}}}$ and thus for any $z \in \R^N$,
$$|a_\e(z)|=|f_\e(|z|) z|\leq \frac{1}{\sqrt{\e-\frac{\e^2}{4}}} |z|.$$ 
Moreover, we have $f_\e^\prime(r)=\frac{r\eta_\e(r)(\eta_\e^\prime(r) r +\eta_\e(r))}{\sqrt{(1-\eta_\e^2(r)r^2)^3}}\geq 0$ because $r\mapsto\eta_\e(r) r$ is increasing, and again by construction, we have $f^\prime_\e(r)=0$ for $r>1-\frac{\e}{2}$.
From this, it follows that for any $z \in \R^N$,
\begin{eqnarray*}
|\partial a_\e(z)| |z| &\leq&  {f_\e^\prime(|z|)} |(z_iz_j)_{i,j=1,\ldots,n}| + f_\e(|z|) |z|\\[6pt]
&\leq& \max_{|z|\leq 1-\frac{\e}{2}}\left(\frac{f_\e^\prime(|z|)}{|z|} |(z_iz_j)_{i,j=1,\ldots,n}|\right)|z| + \frac{1}{\sqrt{\e-\frac{\e^2}{4}}} |z|.
\end{eqnarray*}
On the other hand, there exists a constant $L>1$ depending only on $\e$ such that
$$|a_\e(z)|+|\partial a_\e(z)| |z| \leq L |z|,$$
and thus the growth condition in \cite[(1.2)]{KUM} is satisfied with $s=0$, $ p=2$. For the ellipticity condition, recalling that $f_\e\geq 1$ and $f^\prime_\e\geq 0$, we get that for any $\lambda \in \R^N$,
\begin{eqnarray*}
\left(\partial a_\e \lambda ,\lambda \right)&=& \sum_{i,j=1}^N \left[f_\e^\prime(|z|)\frac{z_i z_j}{|z|} \lambda_i\lambda_j + f_\e(|z|) \delta_{ij}\lambda_i\lambda_j\right]\\
&=&  \frac{f_\e^\prime(|z|)}{|z|} (\lambda,z)^2 + f_\e(|z|)|\lambda|^2 \geq|\lambda|^2,
\end{eqnarray*}
and this completes the proof of this step and thus of the theorem when $m \in\mathopen]1,2_*]$.\\

We prove now the result for $m=1$. The scheme of the proof is the same as in the previous case, but slight modifications to Step 1, Step 2 and Step 3 are needed. From \eqref{eq3termainteobis} it is clear that  
\beq\label{eq1casem1}
|\n u_n|^{2-F(n)}_2  \leq  2 c(N,1) (c(N,s^\prime)+1) |\rho|_{1},
\eeq
where $$F(n)= \begin{cases}  \frac{2 (N+s)}{Ns} & \hbox{if}\ |\n u_n|_2> 1,\\  \frac{2 }{s} & \hbox{if}\ |\n u_n|_2\leq 1.\end{cases}$$
Now observe that from \eqref{eq1casem1}, by choosing $|\rho|_1$ sufficiently small so that  $$2 c(N,1) (c(N,s^\prime)+1) |\rho|_{1}\leq 1,$$ we get $|\n u_n|_2\geq 1$ and thus $F(n)= \frac{2 }{s}$ for all $n \in \N$. Hence, setting $\bar c(N,s):=c(N,1) (c(N,s^\prime)+1)$, by elementary computations, we have
\beq\label{eq1casem2}
|\n u_n|_2  \leq  (2 \bar c(N,s) |\rho|_{1})^{\frac{s}{2(s-1)}}. 
\eeq
Recalling \eqref{eqcontembedd}, we immediately deduce that
\beq\label{eq1casem3}
|u_n|_\infty  \leq  2^{\frac{1}{s-1}} \bar c(N,s)^{\frac{s}{s-1}} |\rho|_{1}^{\frac{1}{s-1}}.
\eeq
Therefore, \eqref{eq5mainteo} in Step 1 is now substituted by

 \begin{equation}\label{eq5mainteocasem1}
\left|\irn \rho_n u_n \ dx\right| \leq  |\rho_n|_{1} |u_n|_{\infty} \leq  2^{\frac{1}{s-1}} \left(\bar c(N,s) |\rho|_{1}\right)^{\frac{s}{s-1}},
\end{equation}
and Step 1 follows with $c_1(N,s,R,\rho)=\omega_N R^N+2^{\frac{1}{s-1}} \left(\bar c(N,s) |\rho|_{1}\right)^{\frac{s}{s-1}}$. The proof of Step 2 is the same with $c_2(N,s,R,\rho)= \frac{\omega_N^{\gamma+2}}{\left[\omega_N R^N+2^{\frac{1}{s-1}} \left(\bar c(N,s) |\rho|_{1}\right)^{\frac{s}{s-1}}\right]^{1+\gamma}}$ and in Step 3 (taking $R=1$) we have
 \begin{equation*}
\begin{array}{lll}
\displaystyle v_n^\gamma(x_0)&\geq & \displaystyle e^{-\gamma/4}  \displaystyle  \frac{\omega_N^{\gamma+1}}{\left[\omega_N+2^{\frac{1}{s-1}} \left(\bar c(N,s) |\rho|_{1}\right)^{\frac{s}{s-1}}\right]^{1+\gamma}} \\[20pt]
&&-2^{\frac{q-4}{2}}c(\rho)\left[ \frac{ \omega_N^{-\frac{2}{q}}}{1-\beta} +  c(\rho)^{\frac{q-2}{2}}  \left(\frac{2}{q}\right)^{\frac{q-2}{2}} \frac{\omega_N^{-1}}{(1-\beta)^{\frac{q-2}{2}}(q-N-1)}\right],
\end{array}
\end{equation*}
and thus if $|\rho|_q+|\rho|_{1}\leq c_3$, for some sufficiently small constant $c_3=c_3(N,s,q)$ we deduce that 
$$ v_n(x_0) \geq \delta,$$
for some $\delta=\delta(N,s,q,\rho) \in (0,1)$. The remaining steps are similar to the case $m \in\mathopen]1,2_*]$. The proof is then complete.
\end{proof}

\begin{remark}
We point out that when $\rho$ is bounded, given an arbitrary bounded domain $\Omega$ with smooth boundary, denoting by $\overline{xy}$ the segment joining two points $x$, $y$, and by
\begin{equation}
\label{eq:setK}
K:=\left\{\overline{xy}\subset\Omega;\  x,y\in\partial\Omega, x\neq y, |u_\rho(x)-u_\rho(y)|=|x-y|\right\},
\end{equation}
the set of light rays associated to $u_\rho$, then from \cite[Corollary 4.2]{BS} we have that $u_\rho$ is a strictly spacelike weak solution of \eqref{eq:BI} in $\Omega\setminus K$. Now, if $K\neq\emptyset$, by invoking \cite[Theorem 3.2]{BS} it follows that any light ray extends to the whole $\RN$ obtaining a contradiction. Unfortunately, when $\rho$ is not bounded it is not possible to replicate entirely the proof of \cite[Theorem 3.2]{BS} and it could happen that $u_\rho$ contains a finite segment of a light ray having as endpoints two singularities of $\rho$. This question is still open and we are investigating on it. 
\end{remark}

\appendix
\section{A variant of Gronwall's Lemma}
The following theorem is a generalization of a result due to Bihari (see \cite{BH}).

\begin{theorem}\label{variantGrom}
Let $T>0$ and let $\U:[0,T] \to [0,+\infty\mathclose[$ be a continuous function such that
\begin{equation}\label{eq:prophyp}
 \U(t) \leq C_0 + \int_0^t\Psi(s) g(\U(s)) \ ds\ \ \forall t \in [0, T],
\end{equation}
where $C_0>0$, $\Psi:(0,T]\to \R^+$ is continuous, $\Psi \in L^1(0,T)$, $g:[0,+\infty)\to [0,+\infty)$ is continuous, (strictly) monotone increasing, such that $\frac{1}{g} \in L^1(0,l)$ for any $l>0$ and set $\displaystyle \Phi(l):=\int_0^l\frac{1}{g(k)} \ dk$. Then
$$\U(t)\leq \Phi^{-1}\left(\Phi(C_0)+ \int_0^t\Psi(s) \ ds\right) \ \forall t \in [0,T].$$
\end{theorem}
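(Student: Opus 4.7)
The plan is to mimic the classical Bihari--LaSalle strategy: promote the integral inequality to a differential inequality for a suitable majorant, then invert via $\Phi$. Define
\[
V(t) := C_0 + \int_0^t \Psi(s)\, g(\U(s))\, ds, \qquad t \in [0,T].
\]
Since $\U$ is continuous on $[0,T]$, $g(\U(\cdot))$ is bounded on $[0,T]$, and because $\Psi \in L^1(0,T)$, the function $V$ is absolutely continuous on $[0,T]$ with $V'(t) = \Psi(t)\, g(\U(t))$ for a.e.\ $t$. Observe that $V$ is nondecreasing and $V(0) = C_0 > 0$, so $V(t) \geq C_0$ on $[0,T]$. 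The hypothesis gives $\U(t) \leq V(t)$, and the monotonicity of $g$ then yields the pointwise a.e.\ differential inequality
\[
V'(t) \leq \Psi(t)\, g(V(t)) \qquad \text{for a.e. } t \in [0,T].
\]

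Next I would exploit that $g(V(t)) \geq g(C_0) > 0$ (strict monotonicity of $g$ together with $C_0 > 0$) to divide,
\[
\frac{V'(t)}{g(V(t))} \leq \Psi(t),
\]
and recognize the left-hand side as $\frac{d}{dt}\bigl[\Phi(V(t))\bigr]$ via the chain rule for absolutely continuous compositions. Integrating on $[0,t]$ gives
\[
\Phi(V(t)) - \Phi(C_0) \leq \int_0^t \Psi(s)\, ds.
\]
Since $1/g > 0$, the function $\Phi$ is continuous and strictly increasing on $[0,+\infty)$, hence $\Phi^{-1}$ is well-defined and monotone increasing on its range. Applying $\Phi^{-1}$ to both sides and combining with $\U(t) \leq V(t)$ produces
\[
\U(t) \leq V(t) \leq \Phi^{-1}\!\left(\Phi(C_0) + \int_0^t \Psi(s)\, ds\right),
\]
which is the desired conclusion.

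The main subtlety is the chain-rule step: because $\Psi$ is only assumed continuous on $(0,T]$ and $L^1$ on $(0,T)$, it may blow up at $0$, and so $V'$ is only defined almost everywhere and is not bounded in general. One must therefore invoke the chain rule for compositions of absolutely continuous functions with $C^1$ functions, applied on the interval $[C_0, \max V]$ where $\Phi$ is Lipschitz (being $C^1$ on an interval bounded away from $0$). A secondary point to check is that $\Phi(C_0) + \int_0^t \Psi(s)\, ds$ lies in the domain of $\Phi^{-1}$ throughout $[0,T]$; this is automatic from the fact that $V$, and hence $\Phi(V)$, is finite on $[0,T]$ because $\U$ (and therefore $V$) is continuous on the compact interval $[0,T]$.
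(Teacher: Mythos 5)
Your proof is correct and follows essentially the same route as the paper's: set up the majorant (the paper uses $y(t)=\int_0^t\Psi g(\U)$ and works with $C_0+y$ where you work directly with $V=C_0+y$), derive the a.e.\ differential inequality from the monotonicity of $g$, divide by $g(V)\geq g(C_0)>0$, integrate, and invert $\Phi$. The only difference is that you spell out the measure-theoretic details (absolute continuity of $V$, the chain rule on the interval $[C_0,\max V]$ where $\Phi$ is Lipschitz) that the paper leaves implicit.
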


\begin{proof}
Let us consider the function $y:[0,T] \to \R$ defined by
$$y(t):= \int_0^t \Psi(s) g(\U(s)) \ ds.$$

Since $\Psi \in L^1(0,T)$ and since $g(\U(s))$ is bounded in $[0,T]$ we have $y(0)=0$. Thanks to \eqref{eq:prophyp}, and being $g$ monotone increasing, we have
\begin{equation}\label{eq:propgrom}
y^\prime(t)=g(\U(t))\Psi(t) \leq g(C_0+y(t)) \Psi(t), \ \forall t\in(0,T].
\end{equation}
Let us observe that since $y(t)\geq0$, $C_0>0$ and being $g\geq0$ strictly increasing we have $g(C_0+y(t))\geq g(C_0)>g(0)\geq 0$ for all $t\in[0,T]$. Therefore dividing by $g(C_0+y(t))$ each side of \eqref{eq:propgrom} and integrating on $(0,t)$ we get that
$$ \int_0^t \frac{y^\prime(s)}{g(C_0+y(s))} \ ds \leq \int_0^t \Psi(s) \ ds, \ \forall t \in (0,T].$$
Changing variable in the first integral, and recalling the definition of $\Phi$ and that $y(0)=0$, we have
\begin{equation}\label{eq:propgrom2}
\Phi(y(t)+C_0)-\Phi(C_0)=\int^{y(t)+C_0}_{C_0} \frac{1}{g(k)} \ dk  \leq \int_0^t \Psi(s) \ ds.
\end{equation}
Now, since $\Phi$ is increasing, recalling the definition of $y(t)$ and using \eqref{eq:prophyp} we obtain
\begin{equation}\label{eq:propgrom3}
\Phi(y(t)+C_0)\geq \Phi(\U(t)).
\end{equation}
At the end, being $\Phi$  invertible and combining \eqref{eq:propgrom2}, \eqref{eq:propgrom3} we deduce that 
$$\U(t)\leq \Phi^{-1}\left(\Phi(C_0)+ \int_0^t\Psi(s) \ ds\right) \ \forall t \in (0,T],$$
and from \eqref{eq:prophyp} it is obvious that $\U(0)\leq C_0$. The proof is then complete.
\end{proof}

\begin{remark}\label{rem:VarGrom}
Let $\beta, \gamma \in (0,1)$ and let $C_0>0$, $C_1\geq 0$. If $\U:[0,T] \to [0,+\infty\mathclose[$ is continuous and verifies
$$ \U(t) \leq C_0 + \int_0^t C_1 s^{-\beta} \left(\U(s)\right)^{\gamma} \ ds\ \ \forall t \in [0, T],$$
then applying Theorem \ref{variantGrom} with $\Psi(s)=C_1 s^{-\beta}$, $g(k)=k^{\gamma}$, since $\Phi(l)=\frac{1}{1-\gamma}l^{1-\gamma}$ by elementary computations we get that $$\U(t) \leq (1-\gamma)^{\frac{1}{1-\gamma}}\left[\frac{C_0^{1-\gamma}}{1-\gamma}+C_1 \frac{t^{1-\beta}}{1-\beta}\right]^{\frac{1}{1-\gamma}}\ \ \forall t \in [0,T].$$
A similar result was already known in the literature (see \cite[Theorem 7.1]{SATA}), but Theorem \ref{variantGrom} applies to a much wider class of nonlinearities $g$.
\end{remark}

\end{document}